\newcommand{\CC}{{\mathbb C}}
\newcommand{\ZZ}{{\mathbb Z}}
\newcommand{\NN}{{\mathbb N}}
\newcommand{\I}{{\mathcal{I}}}
\def\gg{{\mathfrak{g}}}
\def\hh{{\mathfrak{h}}}
\def\CC{{\mathbb C}}
\def\ZZ{{\mathbb Z}}
\def\TT{{\mathcal{T}}}
\newcommand{\fma}{\overset{\circ}{\mathfrak{g}}}
\newcommand{\fmh}{\overset{\circ}{\mathfrak{h}}}
\newcommand{\C}{{\mathbb C}}
\newtheorem{dfn}{Definition}[section]
\newcommand{\bdfn}{\begin{dfn}\rm}
\newcommand{\edfn}{\end{dfn}}
\newtheorem{thm}[dfn]{Theorem}
\newcommand{\bthm}{\begin{thm}}
\newcommand{\ethm}{\end{thm}}                   
\newtheorem{lmma}[dfn]{Lemma}                   
\newcommand{\blmma}{\begin{lmma}}                   
\newcommand{\elmma}{\end{lmma}}                   
\newtheorem{ppsn}[dfn]{Proposition}
\newcommand{\bppsn}{\begin{ppsn}}
\newcommand{\eppsn}{\end{ppsn}}
\newtheorem{crlre}[dfn]{Corollary}
\newtheorem{rmk}[dfn]{Remark}
\newcommand{\brmk}{\begin{rmk}\rm} 
\newcommand{\ermk}{\end{rmk}}
\def\I{{\mathcal I}}
\def\SS{{\mathcal S}}
\def\LL{{\mathfrak L}}
\def\HH{{\mathcal H}}
\numberwithin{equation}{section}
\title{Integrable modules for twisted toroidal extended affine Lie algebras}
\author{ S. Eswara Rao, Sachin S. Sharma and Punita Batra}
\date{}
\begin{document}
\maketitle
\begin{abstract}
In this paper we classify the irreducible integrable modules for the twisted toroidal extended affine Lie algebras (twisted toroidal EALA, in short) with finite dimensional weight spaces when the finite dimensional center acts non-trivially.  Using an automorphism we reduce to the case where $K_0$ acts non-trivially and $K_i$, $1 \leq i \leq n$ act trivially.  Twisted toroidal EALA has natural triangular decomposition and we prove that any irreducible integrable module of it with finite dimensional weight spaces is a highest weight module with respect to the above triangular decomposition. The highest weight space is an irreducible module for the zeroth  component of the twisted toroidal EALA. We then describe the highest weight space in detail.\\\\
{\bf{MSC}:} 17B67,17B66 \\
{\bf{KEY WORDS}:} Divergence zero vector fields, Lie torus, Extended affine Lie algebras.

\end{abstract}

\section{Introduction}
Extended affine Lie algebras (EALAs, for short) are natural generalization of affine Kac-Moody algebras; finite dimensional simple Lie algebra, extended toroidal Lie algebras are examples of EALAs. They first appeared in the work of Saito and Slodowy on elliptic singularities and in the paper of physicists H$\o$egh and Torresani in 1990. Later mathematicians like 
Allison, Azam, Berman, Gao, Pianzola, Neher  and Yoshii systematically developed the theory of EALAs (see \cite{ABFP, EN, YOSH}, and references therein). Unlike affine Kac-Moody algebras, EALAs may have infinite dimensional centers. So representation theory of EALA's is still in progress.

Let $L(\fma) = \fma \otimes A_n$ is a loop algebra, where $\fma$ is a finite dimensional simple Lie algebra over $\mathbb{C}$ and $A_n$ is a  commutative Laurent polynomial ring of $n$ variables. Let $L(\fma) \oplus \Omega_{A_n}/d{A_n}$ be universal central extension of $L(\fma)$ (note that the center is infinite dimensional for $n \geq 2$). These are called toroidal Lie algebras (TLA, in short) and they have been extensively studied \cite{R, ESW}. Let $\text{Der}(A_n)$ be the Lie algebra of darivations on $A_n$ and consider the full toroidal Lie algebra (FTLA, in short) $L(\fma) \oplus \Omega_{A_n}/d{A_n} \oplus \text{Der}(A_n)$. Representation of FTLA also have been studied \cite{RJ}. Neither TLA  nor FTLA are EALA as they do not admit non-degenerate symmetric invariant bilinear form. To rectify this one considers subalgebra  $\SS_n$ of 
$\text{Der}(A_n)$ consisting of divergence zero vector fields (see Section \ref{sec4} for more details). Then the Lie algebra $\tau_{\text{div}} = L(\fma) \oplus \Omega_{A_n}/d{A_n} \oplus \SS_n$
is called toroidal extended affine Lie algebras. These are in fact EALAs. In \cite{BY} Billig constructed irreducible representation of toroidal EALAs using
vertex operator algebras.  In \cite{BLAU} Billig and Lau constructed irreducible modules for twisted toroidal EALAs. In \cite{ FZT1, FZT}, full classification of irreducible integrable modules with finite dimensional weight spaces of toroidal EALA for $n = 2$ was obtained. In this paper we assume $n \geq 3$ and classify irreducible integrable modules for toroidal EALAs where center acts non-trivially (see definition in Section \ref{sec4}).  Actually we consider more general EALAs. We consider commutating finite order automorphisms $\sigma_0, \ldots, \sigma_n$ of
$\fma$ and consider the multi-loop algebra $\bigoplus_{(k_0,k) \in \ZZ^{n+1}}{\fma(\bar{k}_0, \bar{k})t_0^{k_0}t^{k}}$. We assume that multi-loop algebra is a Lie torus (see Section \ref{sec4} for definition). This is not a very serious assumption as it is well known that centerless cores of almost all EALAs
are Lie tori \cite{ABFP}.  We then consider universal central extension of multi-loop algebra and add
$\SS_{n+1}$ the Lie algebra consisting of divergence zero vector fields on $A_{n+1}$ (see Section \ref{sec4} for details).  The resulting Lie algebra is EALA called as twised toroidal EALA, and we denote it by $\TT$.   The aim of this paper is to classify all  irreducible integrable representation of twisted toroidal EALAs with finite dimensional weight spaces where center acts non-trivially.

The proof of our classification problem runs parallel to \cite{PSER} where they consider full twisted TLA. Our twisted toroidal EALA is a proper subalgebra of full twisted EALA. Since we working with smaller algebra
some of the results used in \cite{PSER} won't work here. So we use differnt techniques; notably results
from \cite{BT} and \cite{GL} are very special for divergence zero vector fields.

We now explain our plan of classification result in more detail. In Section \ref{sec4} we start with basic definitions which leads up to the definition of twisted toroidal EALA $\TT$.  In Section \ref{sec5} we 
define root space decomposition and using it define a natural triangular decomposition $\TT = \TT^{-} \oplus \TT^{0} \oplus \TT^{+}$. We fix an irreducible integrable module $V$ of $\TT$ with finite dimensional weight spaces where $K_0$ acts non-trivially and $K_i$ act trivially for $1 \leq i \leq n$. This
assumption is due to the fact that  group $GL(n+1, \ZZ)$ acts as automorphism on FTLA and leaves $\tau_{\text{div}}$ invariant. Then using earlier work of \cite{PSER} it follows that the space $M = \{v \in V | \TT^{+}v = 0\}$ is a non-zero $\TT^0$ irreducible module.  The rest of the paper revolves around $M$ as we try to decode the structure of $M$. The $\TT^0$-module $M$ has natural $\ZZ^n$
gradation and using integrality and Weyl group invariance of weights of $V$, we prove the dimensions of graded pieces of $M$ are uniformly bounded.  Using this we prove that $\Omega_{A(m)}/d_{A(m)}$ acts trivially on $M$(Propostion \ref{thc}). Then we prove proposition
\ref{prpm} which encodes the action subspace $\displaystyle{\sum_{r \in \Gamma}}{\CC t^{r}K_0} \oplus \displaystyle{\sum_{r \in \Gamma}}{\CC t^{r}d_0} $ of $\TT^{0}$. We would like to emphasize the fact unlike \cite{PSER} where the corresponding results come by earlier works of authors, we need completely different techinque to prove Proposition \ref{prpm}. For its proof 
hinges on results from \cite{GL} (Theorem \ref{thi} in our paper) and \cite{CP}. By Propsition \ref{prpm}
it follows that $M \cong M^1 \otimes A(m)$ for some space $M^1$. Then for some suitable subspace $W$ of $M$, we consider $\widetilde{V} = M/W$. Here $W$ and $\widetilde{V}$ are not $\TT^0$- modules but modules for some suitable Lie algebra $\mathcal{I}$ (see Sections \ref{sec5} and \ref{sec7} for details ). It follows from
the work of \cite{PSER} that  $\widetilde{V}$ is completely reducible $\mathcal{I}$-module with isomorphic irreducible components. In Section \ref{sec7},  using results of \cite{NSS} and\cite{BT} we classify all the finite dimensional irreducible modules for $\mathcal{I}$. Finally in Section \ref{sec8}
by going up procedure we define module for $\TT^0$, which contains an isomorphic copy of $M$
and we conclude with the final result Theorem \ref{THM}.

\section{} \label{sec4}
Let $A_{n+1} = \CC[t_0^{\pm 1}, \cdots, t_n^{\pm 1}]$ be a Laurent polynomial ring in $n + 1$ variables.
$L(\fma) = \fma \otimes A_{n+1}$ be the corresponding loop algebra, $\fma$ is a finite dimensional simple Lie 
algebra over $\mathbb{C}$ with a Cartan subalgebra $\fmh$. Let $\Omega_{A_{n+1}}$ be a vector space spanned by the symbols $t_0^{k_0}t^{k}K_i, 0\leq i \leq n,
k_0 \in \ZZ, k \in \ZZ^{n}$. Let $d{A_{n+1}}$ be the subspace spanned by $\sum_{i =0}^{n}{k_i t_0^{k_0}t^{k}K_i}$. It is well known that ${\tilde{L}}(\fma) = L(\fma) \oplus \Omega_{A_{n+1}}/d{A_{n+1}}$ is the
universal central extension of $L(\fma)$ with the following brackets:

$[X(k_0, k), Y(l_0, l)] = [X,Y](l_0+k_0, l+k) + (X|Y) \sum_{i =0}^{n}{k_i t_0^{l_0 + k_0}t^{l + k}K_i}$,
where $X(k_0, k) = X \otimes t_{0}^{k_0}t^{k}$. Recall the Lie algebra of derivations of $A_{n+1}$,
$\mathrm{Der}(A_{n+1})$ with basis $\{d_i, t_{{0}}^{r_0} t^{r}d_{i} | 0 \leq i \leq n, 0 \neq (r_0, r) \in \ZZ^{n+1}\}$.  $\mathrm{Der}(A_{n+1})$ acts on $ \Omega_{A_{n+1}} /d{A_{n+1}}$ by 

$ t_{{0}}^{p_0} t^{p}d_{a}( t_{{0}}^{q_0} t^{q} K_{b}) = q_{a} t_{{0}}^{p_0 + q_0} t^{p + q} K_{b}
+ \delta_{ab} \sum_{c=0}^{n}{p_{c} t_{{0}}^{p_0 + q_0} t^{p + q}K_{c}}.$

There are two non-trivial 2-cocycle of $\mathrm{Der}(A_{n+1})$ with values in $\Omega_{A_{n+1}}/d {A_{n+1}}$:

$\phi_{1}( t_{{0}}^{p_0} t^{p}d_{a},  t_{{0}}^{q_0} t^{q} d_{b}) = -q_{a}p_{b} \sum_{c=0}^{n}{p_{c} t_{{0}}^{p_0 + q_0} t^{p + q}K_{c}}$, 

$\phi_{1}( t_{{0}}^{p_0} t^{p}d_{a},  t_{{0}}^{q_0} t^{q} d_{b}) = -p_{a}q_{b} \sum_{c=0}^{n}{p_{c} t_{{0}}^{p_0 + q_0} t^{p + q}K_{c}} .$
Let $\phi$ be any linear combination of $\phi_1$ and $\phi_2$. Then 
$\tau = L(\fma) \oplus \Omega_{A_{n+1}}/d{A_{n+1}} \oplus \mathrm{Der}(A_{n+1})$ is a Lie algebra with the following brackets and called as full toroidal Lie algebra (FTLA, for short):
\begin{align*}
[t_{{0}}^{p_0} t^{p}d_a, X(q_0, q)] & = q_{a} X(p_0 + q_0, p+q),\\
[t_{{0}}^{p_0} t^{p} d_{a}, t_{{0}}^{q_0} t^{q} K_{b}] & = q_a t_{{0}}^{p_0 + q_0} t^{p + q}K_b +
\delta_{ab}\sum_{c=0}^{n}{p_{c} t_{{0}}^{p_0 + q_0} t^{p + q}K_{c}}, \\
[ t_{{0}}^{p_0} t^{p} d_{a}, t_{{0}}^{q_0} t^{q} d_{b}] & = q_{a} t_{{0}}^{p_0 + q_0} t^{p + q} -
p_{b} t_{{0}}^{p_0 + q_0} t^{p + q} + \phi(t_{{0}}^{p_0} t^{p} d_{a}, t_{{0}}^{q_0} t^{q} d_{b}).
\end{align*}
Now consider the subalgebra of divergence zero vector fields $\mathcal{S}_{n+1}$ of $\mathrm{Der}(A_{n+1})$. One can define
$\mathcal{S}_{n+1} = \{D(u,r)| (u|r) = 0,  u \in \CC^{n+1}, r\in \ZZ^{n+1}\}$.

Now consider the subalgebra
$\tau_{\mathrm{div}} = L(\fma) \oplus \Omega_{A_{n+1}}/d{A_{n+1}} \oplus \mathcal{S}_{n+1}$  of $\tau$.
It is well known that unlike $\tau$, $\tau_{\mathrm{div}}$ possesses a non-degenerate symmetric, invariant bilinear form and is called as toroidal extended affine Lie algebra. The form is defined as
follows:
\begin{align*}
\big(X \otimes t^{r} \rvert Y\otimes t^{s} \big) &= \delta_{r, -s}(X |Y), X,Y \in \fma, r, s \in \ZZ^{n+1};\\
\Bigg(\sum_{c =0}^{n}{a_{c}t^{r} d_{c} \vert t^{s}K_{d}}\Bigg) & = \delta_{r, -s} a_{d}.
\end{align*}
All other brackets of bilinear form are zero. 
    Now let $\gg_1$ be any arbitrary finite dimensional simple Lie algebra $\mathbb{C}$ with a Cartan subalgebra $\hh_1$.  Let $\Delta(\gg_1, \hh_1) = \mathrm{supp}_{\hh_1}{(\gg_1)}$. Then $\Delta_{1}^{\times} = \Delta^{\times}(\gg_1, \hh_1) = \Delta(\gg_1, \hh_1) \backslash \{0\}$ is an irreducible reduced finite
root system with at most two root lengths. Let ${\Delta_{1, \mathrm{sh}}^{\times}}$ denote the non zero short roots of $\Delta_1$. Define

\[
    {\Delta_{1, \mathrm{en}}^{\times}}= 
\begin{cases}
    \Delta_{1}^{\times} \cup 2  {\Delta_{1, \mathrm{sh}}^{\times}}\,\,\,\, &\text{if} \,\,\,\,{\Delta_{1}}^{\times} = B_l \,\, \text{type}\\
   \Delta_{1}^{\times}             & \text{otherwise}.
\end{cases}
\]

 We need the following definition:
\bdfn
A finite dimensional $\gg_1$-module $V$ is said to satisfy condition $(M)$ if $V$ is irreducible
with dimension greater than 1 and weights of $V$ relative to $\hh_1$ are contained in $ { \Delta_{1, \mathrm{en}}^{\times}}$.
\edfn

Now recall that $\fma$ is a finite dimensional simple Lie algebra with a Cartan subalgebra $\fmh$  and let $\sigma_0, \sigma_1, \cdots, \sigma_n $ be the 
commuting automorphism of $\fma$ of order $m_0, m_1, \cdots, m_n$ respectively. Let $m = (m_1, \ldots, m_n) \in \ZZ^n$ . Define
$\Gamma_0  = m_0 \ZZ$ and $\Gamma = m_1 \ZZ \oplus \cdots \oplus m_n \ZZ$. Let $\Lambda_0: = \ZZ/\Gamma_0$ and $\Lambda := \ZZ^{n}/\Gamma$.
Then we have $\fma = \displaystyle{\bigoplus_{(\bar{k}_0,\bar{k})}{\fma(\bar{k}_0,\bar{k}})}$, where $(\bar{k}_0, \bar{k}) = (\bar{k}_0, \bar{k}_1\ldots, \bar{k}_n) \in \Gamma_0 \oplus \Gamma$ , and $\fma(\bar{k}_0,\bar{k}) = \{X \in \fma | \sigma_i (X) = \xi_i^{k_i} X, 0\leq i \leq n\}$, where
$\xi_i$ are $m_i$-th primitive root of unity for $i = 0 \ldots, n$.

\bdfn
A multiloop algebra $\bigoplus_{(k_0,k) \in \ZZ^{n+1}}{\fma(\bar{k}_0, \bar{k})t_0^{k_0}t^{k}}$ is called a Lie torus ${LT}$
if
\begin{enumerate}
\item $\fma(\bar{0},\bar{0})$ is a finite dimensional simple Lie algebra.
\item For $(\bar{k}_0, \bar{k}) \neq (0,0)$ and $\gg(\bar{k}_0, \bar{k}) \neq 0$, $\gg(\bar{k}_0, \bar{k}) \cong U(\bar{k}_0, \bar{k}) \oplus W(\bar{k}_0, \bar{k})$, where $U(\bar{k}_0, \bar{k})$
is trivial as $\gg(\bar{0}, \bar{0})$-module and either $W(\bar{k}_0, \bar{k})$ is zero or satisfy the condition (M).
\item The order of the group generated by $\sigma_i$, $0 \leq i \leq n$ is equal to the product of 
orders of each $\sigma_i$, for $0 \leq i \leq n$.
\end{enumerate}
\edfn
Let $\fmh(0)$ denote a Cartan subalgebra of $\fma(\bar{0}, \bar{0})$. Then by \cite{NAOI} Lemma 3.1.3, $\fmh(0)$ is ad-diagonalizable on $\fma$ and $\Delta^{\times} = \Delta^{\times}(\fma, \fmh(0))$ is an irreducible finite root system in $\fmh(0)$ (Proposition 3.3.5, \cite{NAOI}). Let $\Delta_{0} := \Delta(\fma(\bar{0}, \bar{0}), \fmh(0))$
One of the main properties of Lie tori is that  $\Delta := \Delta(\gg, \fmh(0)) = \Delta_{0, \mathrm{en}}$ (Proposition 3.2.5, \cite{ABFP}).
Let $A(m) = \CC[t_{1}^{\pm m_1}, \ldots, t_n^{\pm m_n}]$ and $A(m_0, m) = 
\CC[t_{0}^{\pm m_0}, t_{1}^{\pm m_1}, \ldots, t_n^{\pm m_n}]$ and
$\mathcal{S}_{n+1}(m_0, m) = \{D(u,r)\,|\, (u | r) =0, u \in \CC^{n+1}, r \in \Gamma_0 \oplus \Gamma\}$. 
The Lie algebra $\mathcal{T} = LT \bigoplus \Omega_{A(m_0,m)}/d_{A(m_0, m)} \bigoplus \mathcal{S}_{n+1}(m_0,m)$
is called twisted toroidal extended affine Lie algebra . The purpose of this paper is to classify the irreducible integrable modules of twisted toroidal EALAs. 

{\bf{Automorphism}}: Let $B = (b_{ij}) \in GL(n+1, Z)$ and $B^{-1} = (c_{ij})$, then the following is an automorphism $\phi_{B}$ on $\tau$ by:
\begin{align*}
\phi_{B} (X \otimes t^{(k_0, k)} )& = X \otimes t^{(k_0,k)B^{t}},\\
\phi_{B}(t^{(k_0, k)}K_i) &= \displaystyle{\sum_{p =0}^{n}{b_{ip}}t^{(k_0, k)B^{t}}},\\
\phi_{B}(t^{(k_0, K)} d_j) & = \displaystyle{\sum_{p =0}^{n}{c_{jp}}t^{(k_0, k)B^{t}}}. 
\end{align*}
 It is easy to check that $\phi_B$ is an automorphism on $\tau$. Now as for $u \in \CC^{n+1}, r \in \ZZ^{n+1} $ such that $(u|r) =0$, $\phi_B D(u,r) = D(uB^{-1}, r B^{t})$ and as $(u B^{-1} | r B^{t}) = (u| r B^{t} {B^{-1}}^t) =0$. So $\SS_{n+1}$ is invariant under $\phi_B$. Hence $\phi_B$ leaves $\tau_{\text{div}}$
invariant. So up to an automorphism we can assume that the central element $K_0$ acts as $C_0 \in \ZZ_{>0}$ and $K_i$'s as $0$ for $1 \leq i \leq n$.

\section{} \label{sec5}
In this section we will do root decomposition of $\TT$. Let $\HH = \fmh(0) \oplus \displaystyle{\sum_{i =0}^{n}\CC K_i \oplus \sum_{i=0}^{n}} \CC d_i$ will be our Cartan subalgebra for the root decomposition of $\TT$. Define $\delta_i, \omega_i \in \HH^*$ for $0 \leq i \leq n$ by $\delta_i (\fmh(0)) =0,  \delta_i (K_j) = 0, \delta_i (d_j) = \delta_{i j}$ and $\omega_i (\fmh(0)) = 0, \omega_i (K_j) = \delta_{i j}, \omega_i (d_j) = 0$. Let us denote $\delta_{k} = \displaystyle{\sum_{i =1}^{n}{k_i \delta_i}}$ for $k \in \ZZ^n$. Define $\fma(\bar{k}_0, \bar{k}, \alpha) := \{ x \in \fma(\bar{k}_0, \bar{k}) | [h, x] = \alpha(h)x, \,\,  \forall \,\, h \in \fmh(0)\}$. Then we have 
$\TT = \displaystyle{\bigoplus_{\beta \in \Delta}}{\TT_{\beta}}$, where $\Delta \subseteq \{ \alpha + k_0 \delta_0 + \delta_k | \alpha \in \Delta_{0, \mathrm{en}}, k_0 \in \ZZ, k \in \ZZ^{n} \}$.  We have
$\TT_{\alpha + k_0 \delta_0 + \delta_k} = \fma(\bar{k}_0, \bar{k}, \alpha) \otimes t_{0}^{k_0}t^{k}$ for $\alpha \neq 0$, and  $\TT_{ k_0 \delta_0 + \delta_k} = \fma(\bar{k}_0, \bar{k}, 0) \otimes t_{0}^{k_0}t^{k} \oplus  \displaystyle{\bigoplus _{i = 0}^{n}{\CC t_{0}^{k_0}t^{k}}K_i  \oplus \bigoplus_{i =0}^{n} \CC t_{0}^{k_0}t^{k} d_i}$
for  $k_0 \delta_0 + \delta_k \neq 0$ and $\TT_0 = \HH$.

In order to define a non-degenerate form on $\HH^*$ we first extend $\alpha \in \fmh(0)^*$ to $\HH$ by defining
$\alpha(K_i) = \alpha(d_i) = 0, \,\, \forall \,\, 0 \leq i \leq n$. Then $(\fmh(0) | K_i) =  (\fmh(0) | d_i), (\delta_k + \delta_{k_0} | \delta_l + \delta_{l_0}) =  (\omega_i | \omega_j) = (\delta_k, \delta_l) = 0$ and $(\delta_i | \omega_j) = \delta_{i j}$, and form on $\fmh (0)$  is restriction of the form of $\fma$. It is easy to see that this form is non-degenerate on $\HH^*$. A root $\beta = \alpha + k_0 \delta_0 +
\delta_k$ is called real root if $\alpha \neq 0$. Let $\Delta^{\mathrm{re}}$ denote the set of all real roots,
and $\beta^{\vee} = \alpha^{\vee} + \frac{2}{(\alpha|\alpha)} \displaystyle{\sum_{i =0}^{n}{k_i K_i}}$ is
co-root of $\beta$, where $\alpha^{\vee}$ is co-root of $\alpha \in \Delta_{0, \mathrm{en}}$. Then
$\beta(\beta^{\vee}) = \alpha(\alpha^{\vee}) = 2$.  For any real root $\gamma$ of $\TT$, define
the reflection $r_{\gamma}$ on $\HH^*$ by $r_{\gamma}(\lambda) = \lambda - \lambda(\gamma^{\vee}) \gamma$ for $\lambda \in \HH^*$. Let Weyl group $W$ of $\TT$ be the 
group generated by $r_{\gamma}, \,\, \forall \,\,\gamma \in \Delta^{\mathrm{re}}$. Now we define the category of integrable modules for $\TT$.
\bdfn
A $\TT$-module $V$ is called integrable if 
\begin{enumerate}
\item $\displaystyle{V = \bigoplus_{\lambda \in \HH^* } {V_{\lambda}}}$, where $V_{\lambda} = \{v \in V |\,\, h.v = \lambda(h)v \,\, \forall \,\, h \in \HH\}$ and $\mathrm{dim}(V_{\lambda}) < \infty$.
\item All the real root vectors act locally nilpotently on $V$, i.e., $\fma(\bar{k}_0, \bar{k}, \alpha) \otimes t_{0}^{k_0}t^{k}$ acts locally nilpotently on $V$ for all $0 \neq \alpha \in \Delta_{0,\mathrm{en}}$.
\end{enumerate}
\edfn
We have the following :
\bppsn
Let $V$ be an irreducible integrable module for $\TT$. Then
\begin{enumerate}
\item $P(V) = \{\gamma \in \HH^* |\,  V_{\gamma} \neq 0\}$ is $W$- invariant.
\item $\mathrm{dim}(V_{\gamma}) = \mathrm{dim}(V_{w \gamma}), \,\, \forall \,\, w \in W$.
\item If $\lambda \in P(V)$  and $\gamma \in \Delta^{\mathrm{re}}$, then $\lambda (\gamma^{\vee}) \in \ZZ$.
\item If $\lambda \in P(V)$  and $\gamma \in \Delta^{\mathrm{re}}$, and $\lambda(\gamma^{\vee}) >0$, then $\lambda - \gamma \in P(V)$.
\item For $\lambda \in P(V)$, $\lambda(K_i)$ is an integer independent of $\lambda$.
\end{enumerate}
\eppsn

\subsection{}
Now rest of this paper we will work with irreducible integrable representation $V$ of $\TT$ with
central element $K_0$ acts as $C_0 \in \ZZ_{>0}$ and rest of $K_i$'s act trivially for $1 \leq i \leq n$.
We now define an order on $\HH^*$. Let for $\lambda \in \HH^*$, $\lambda'$ denote the restriction of $\lambda$ to $\fmh(0)^*$ and for a given $\lambda '\in \fmh(0)^*$, extend it to $\HH^*$ by
defining $\lambda'(K_i) =0$ and $\lambda'(d_i) =0$ for $0 \leq i \leq n$. Then we have unique expression for $\lambda = \displaystyle{\lambda' + \sum_{i =0}^{n}{\lambda(K_i) \omega_i} + \sum_{i=0}^{n}{\lambda(d_i) \delta_i}}$ and for $\lambda \in P(V)$ we have $\lambda = \lambda' +
\lambda(K_0)\omega_0 + \lambda(d_0)\delta_0 +\displaystyle{ \sum_{i=1}^{n}{\lambda(d_i) \delta_i}}.$
So for $\lambda \in P(V)$, we write $\lambda = \bar{\lambda} + \displaystyle{ \sum_{i=i}^{n}{\lambda(d_i) \delta_i}}$, where $\bar{\lambda} = \lambda' +
\lambda(K_0)\omega_0 + \lambda(d_0)\delta_0$. Let $\beta_0$ be a maximal root in $\Delta_{0, \mathrm{en}}$. Define $\alpha_0 = - \beta_0 + \delta_0$, which may not be a root $\TT$.  Let 
$\alpha_1, \ldots, \alpha_q$ be simple roots of $\Delta(\fma(\bar{0}, \bar{0}), \fmh(0))$ and define
the lattice $Q^+ = \displaystyle{\bigoplus_{i =0}^{q} \NN \alpha_i}$. Let for $\Lambda, \lambda \in \HH^*$, we define an ordering on $\HH$ by $\lambda \leq \Lambda$ if $\Lambda - \lambda \in Q^+ 
$. It is easy to see that for $\Lambda \leq \lambda$, $\Lambda(d_i) = \lambda (d_i)$ for $1 \leq i \leq n$.
Consider the natural triangular decomposition of $\mathcal{T}$:
\begin{align*}
& LT^{+} =  \displaystyle{ \bigoplus_{\alpha + k_0 \delta_0 >0, \,\, k \in \ZZ^{n}}}{\gg(\bar{k}_0, \bar{k}, \alpha) \otimes t_{0}^{k_0}t^{k}} ;\\
& LT^{-} =  \displaystyle{ \bigoplus_{\alpha + k_0 \delta_0 <0, \,\, k \in \ZZ^{n}}}{\gg(\bar{k}_0, \bar{k}, \alpha) \otimes t_{0}^{k_0}t^{k}} ;\\
&LT^{0} =  \displaystyle{ \bigoplus_{ k \in \ZZ^{n}}}{\gg(\bar{0}, \bar{k}, 0)}\otimes t^{k} ;\\
\SS^{+}_{n+1}(m_0,m) = &\{D(u,r)\,|\, u \in \CC^{n+1}, r \in \Gamma_0 \oplus \Gamma, (u|r) =0, r_0 > 0\} ;\\
\SS^{-}_{n+1}(m_0,m) = &\{D(u,r)\,|\, u \in \CC^{n+1}, r \in \Gamma_0 \oplus \Gamma, (u|r) =0, r_0 < 0\} ;\\
\SS^{0}_{n+1}(m_0,m) = &\{D(u,r)\,| \,u \in \CC^{n+1}, r \in \Gamma_0 \oplus \Gamma, (u|r) =0, r_0 = 0\} ;\\
& Z^{+} =   \displaystyle{ \bigoplus_{\substack{0 \leq i \leq n \\ 0 < s_0 \in \Gamma_0, s \in \Gamma}}}{\CC t_{0}^{s_0} t^{s} K_i} ;\\
& Z^{-} =  \displaystyle{ \bigoplus_{\substack{0 \leq i \leq n \\ 0>s_0 \in \Gamma_0, s \in \Gamma}}}{\CC t_{0}^{s_0} t^{s} K_i} ;\\
& Z^{0} =  \displaystyle{\bigoplus_{0 \leq i \leq n, s\in \Gamma}}{\CC  t^{s} K_i} .
\end{align*}
Let $\mathcal{T}^{+} = LT^{+} \oplus Z^{+} \oplus \SS^{+}_{n+1}(m_0,m)$, $\mathcal{T}^{-} = LT^{-} \oplus Z^{-} \oplus \SS^{-}_{n+1}(m_0,m)$ and $\mathcal{T}^{0} = LT^{0} \oplus Z^{0} \oplus \SS^{0}_{n+1}(m_0,m)$. Then $\mathcal{T} = \mathcal{T}^{-} \oplus \mathcal{T}^{0} \oplus \mathcal{T}^{+}$ is a trigular decomposition of $\mathcal{T}$.  We have the following
\bthm \label{th5}
The space $M = \{ v \in V \,|\, \mathcal{T}^{+}v = 0\} $ is non-zero.
\ethm
\begin{proof}
It will suffice to show the existence of $\Lambda \in P(V)$ such that $\Lambda + \beta + \delta_k \notin P(V)$ for any root $\beta + \delta_k \in P(V)$ with $\beta > 0$ and $k \in \ZZ^n$. The proof is similar to the proof of Theorem 5.2 of \cite{PSER}.
\end{proof}

It follows that $M$ is a irreducible $\TT^{0}$-module and $U(\TT^-)M = V$. Note that $M$ is $\ZZ^n$-graded as 
$d_i \in \SS^{0}_{n+1}(m_0,m)$. We identify $\SS^{0}_{n+1}(m_0,m) $ with $\SS_{n}(m) \oplus  \displaystyle{\sum_{r \in \Gamma}}{t^r d_0}$, where $\SS_n(m) = \{D(u,r) : u \in \mathbb{C}^n, r \in \Gamma, (u|r) = 0\}$. So we identify $\mathcal{T}^{0}$ with 
 $ LT^{0} \oplus \SS_{n}(m) \oplus  \displaystyle{\sum_{r \in \Gamma}}{\CC t^{r}d_0} \oplus Z^{0}$.

Now we will prove that weight spaces of $M$ are uniformly bounded and 
$\{t^s K_i | s \in \Gamma,  1 \leq i \leq n\}$ acts trivially on $M$. For this let us fix an $i, 1 \leq i \leq n$ and  consider the loop algebra $\fma(\bar{0}, \bar{0}) \otimes \C[t_{i}^{m_i}, t_i^{- m_i}] \oplus d_i$.
Let $\theta$ be the highest root of $\fma(\bar{0}, \bar{0})$  and $\theta^{\vee}$ be the coroot. Now as
$\fmh(0)$ acts as scalar on $M$, let $\lambda \in \fmh(0)^*$ such that $h (v) = \lambda(h) v , \forall h\in \fmh(0), \forall v \in M$, here $\lambda$ is restriction of $\Lambda$ of  Theorem \ref{th5} to $\fmh(0)$. Now consider the lattice $ \mathfrak{M} =\gamma (\mathbb{Z} [\overset{\circ}{W} (\theta^{\vee})])$, where $\overset{\circ}{W}$ denote the finite Weyl group of $\fma(\bar{0}, \bar{0})$ and $\gamma : \fmh(0) \mapsto \fmh(0)^*$ is an isomorphism.
Consider the element $t_{i, h} (\lambda) = \lambda - \lambda(h)\delta_i$, where $h \in \mathbb{Z} [\overset{\circ}{W} (\theta^{\vee})]$. Note that $\lambda(K_i) = 0$. Now as $t_{i, h_1} t_{i, h_2} = t_{i, h_1 + h_2}$, we can identify the group generated by
$\{t_{i, h} : h \in \mathbb{Z} [\overset{\circ}{W} (\theta^{\vee})] \}$ with $\mathfrak{M}$. Let $r_{\lambda} = \text{min}\{ \lambda(h) : h \in \mathbb{Z} [\overset{\circ}{W} (\theta^{\vee})] , \lambda(h) >0\}.$

\blmma \label{l4}
Consider $\lambda + r\delta_i, r \in \ZZ$. Then there exists $w \in \mathfrak{M}$ such that 
$w(\lambda + r\delta_i) = \lambda + \bar{s} \delta_i$, for $0 \leq \bar{s} < r_{\lambda}$.
\elmma
\begin{proof}
See 2.4 of \cite{CG}.
\end{proof}

Now consider the Lie algebra $\widetilde{\fmh}(A(m))= \fmh(0) \otimes A(m) \bigoplus \Omega_{A(m)}/ d(A(m))$. Let $\widetilde{W}$ be the Weyl group of the corresponding FTLA
$$\fma(\bar{0}, \bar{0})  \otimes A(m) \bigoplus \Omega_{A(m)}/ d(A(m)) \bigoplus \text{Der}(A(m)).$$ As before consider the loop subalgebra $\fma(\bar{0}, \bar{0}) \otimes \C[t_{i}^{m_i}, t_i^{- m_i}] \oplus d_i$ of the FTLA.  By the discussion above we have $t_{i, h}{\lambda} = \lambda - \lambda(h)\delta_i $ an element of $\widetilde{W}_{i} \subseteq \widetilde{W}$, where $\widetilde{W}_{i}$ denote the corresponding Weyl group of  $\fma(\bar{0}, \bar{0}) \otimes \C[t_{i}^{m_i}, t_i^{- m_i}] \oplus d_i$. Then for $s_i \in \ZZ$ and
$\lambda + s_i \delta_i = \lambda + k_i \delta_i + r_i m_i \delta_i$, where $s_i = k_i + r_i m_i$, $0 \leq k_i < m_i$. Then by the Lemma \ref{l4} we get $w_i \in \widetilde{W}$ such that
$w_i (\lambda + s_i \delta_i) = \lambda + k_i \delta_i + \bar{s_i} \delta_i$, where $0 \leq k_i < m_i$ and $0 \leq \bar{s_i} < r_{\lambda}$.

\begin{crlre} \label{cr1}
Let $\delta_r = \sum_{i =1}^{n}{r_i \delta_i}$, where $r = (r_1, \dots ,r_n) \in \ZZ^n$. Let 
$r_i = k_i + s_i m_i, 0 \leq k_i < m_i$. Then there exists $w \in \widetilde{W}$ such that 
$w(\lambda + \delta_r) = \delta_k + \sum_{i =1}^{n}{\bar{s_i} m_i \delta_i} + \lambda$, where
$0 \leq \bar{s_i} < r_{\lambda}$.
\end{crlre}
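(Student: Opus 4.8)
The plan is to deduce Corollary \ref{cr1} from the one-variable reduction established in the paragraph immediately preceding it, applied separately in each of the $n$ coordinate directions and then composed. First I would record, for each $i$, the decomposition $r_i = k_i + s_i m_i$ with $0 \le k_i < m_i$, so that
$\delta_r = \sum_{i=1}^n k_i \delta_i + \sum_{i=1}^n s_i m_i \delta_i = \delta_k + \sum_{i=1}^n s_i m_i \delta_i$.
The twist part $\delta_k$ records the contribution modulo $m_i$ and must survive untouched, whereas the part $\sum_i s_i m_i \delta_i$ lies in the lattice spanned by the imaginary roots $m_i\delta_i$ of the loop subalgebras $\fma(\bar{0},\bar{0}) \otimes \CC[t_i^{m_i}, t_i^{-m_i}] \oplus d_i$, and this is exactly what the affine translations can move.

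For each fixed $i$ I would invoke the one-variable statement proved just above the corollary: rewriting the loop variable as $t_i^{m_i}$, Lemma \ref{l4} supplies a translation $w_i = t_{i,h_i} \in \widetilde{W}_i \subseteq \widetilde{W}$, with $h_i \in \mathfrak{M} = \gamma(\ZZ[\overset{\circ}{W}(\theta^{\vee})])$, such that $w_i(\lambda + r_i\delta_i) = \lambda + k_i\delta_i + \bar{s_i} m_i\delta_i$ with $0 \le \bar{s_i} < r_\lambda$. The point is that the translation lattice advances in steps of $r_\lambda m_i$ in the $\delta_i$-direction, so $s_i$ is reduced modulo $r_\lambda$ to $\bar{s_i}$, while $k_i < m_i$ is fractional relative to $m_i\delta_i$ and hence unreachable.

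The one genuine point requiring care — the only content beyond bookkeeping — is that these $n$ reductions are mutually independent, so composing them does not disturb the coordinates already reduced. Here the running hypothesis $\lambda(K_i) = 0$ enters decisively. Since each $h_i \in \fmh(0)$ and $\delta_j(\fmh(0)) = 0$ for all $j$, one has $(\lambda + \delta_r)(h_i) = \lambda(h_i)$, so $t_{i,h_i}(\lambda + \delta_r) = \lambda + \delta_r - \lambda(h_i)\delta_i$; thus the translation shifts only the $\delta_i$-coefficient and fixes the $\fmh(0)$-part together with every $\delta_j$, $j \ne i$. Consequently the $t_{i,h_i}$ pairwise commute and act on disjoint coordinates. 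Setting $w = t_{n,h_n}\cdots t_{1,h_1} \in \widetilde{W}$ then yields $w(\lambda + \delta_r) = \lambda + \sum_i k_i\delta_i + \sum_i \bar{s_i} m_i\delta_i = \delta_k + \sum_{i=1}^n \bar{s_i} m_i\delta_i + \lambda$ with $0 \le \bar{s_i} < r_\lambda$, which is the asserted form. The main (mild) obstacle is precisely the verification that $\lambda(K_i) = 0$ forces each translation to be \emph{pure} in its own direction; once that is in place, the corollary is simply the iteration of the one-dimensional reduction over $i = 1, \dots, n$.
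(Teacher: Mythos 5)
Your proof is correct and takes essentially the same route as the paper: the paper treats Corollary \ref{cr1} as an immediate consequence of Lemma \ref{l4} applied in each coordinate direction through the translations $t_{i,h_i} \in \widetilde{W}_i \subseteq \widetilde{W}$, which is exactly your argument. The only difference is that you spell out the step the paper leaves implicit, namely that since $h_i \in \fmh(0)$ and $\delta_j(\fmh(0)) = 0$, each translation moves only the $\delta_i$-coefficient, so the $n$ one-variable reductions commute and can be composed.
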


Now consider $M = \bigoplus_{k \in \ZZ^n}{M_k}$. Let $M' = \{ M_{l} : l \in \ZZ^n, l_i = k_i + m_i\bar{s_i},  0 \leq k_i <m_i,  0 \leq \bar{s_i}  < r_{\lambda}, 1 \leq i \leq n\}$. Then clearly $M'$ is a finite set. Let $N = \text{Max  dim}\{T_s : T_s \in M'\}$.
\begin{ppsn}
The dimensions of the weight spaces of $M$ are uniformly bounded by $N$.
\end{ppsn}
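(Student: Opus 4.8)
The plan is to combine the $W$-invariance of weight multiplicities (part (2) of the Proposition above) with the reduction supplied by Corollary \ref{cr1}, using these to push the index of an arbitrary graded piece $M_k$ into the finite set $M'$. Throughout I would write $\bar\lambda$ for the common value of the $\fmh(0)$-, $K_0$- and $\delta_0$-components of the weights of $M$, so that the weight of $M_k$ is $\bar\lambda+\delta_k$. This common $\bar\lambda$ exists because $\fmh(0)$ acts by the scalar $\lambda$ on $M$, while $K_0$ acts by $C_0$, and $d_0$ is central in $\TT^0$ (every element of $\TT^0$ has $t_0$-degree $0$) and so acts by a scalar on the irreducible $\TT^0$-module $M$; equivalently, $\TT^0$ preserves each $d_0$-eigenspace, so $M$ lies in a single one.

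The key preliminary step I would carry out is a top-level identification: if $\nu\in P(V)$ has the same $\fmh(0)$- and $\delta_0$-components as the weights of $M$, then $V_\nu=M_\nu$. Since $V=U(\TT^-)M$, one has the weight decomposition $V_\nu=\sum_{\eta}U(\TT^-)_{-\eta}M_{\nu+\eta}$, with $\eta$ ranging over $\NN$-combinations of positive roots of $\TT$. It therefore suffices to show that any such $\eta$ with vanishing $\fmh(0)$- and $\delta_0$-components equals $0$. Inspecting $\TT^{+}$, every positive root $\alpha+k_0\delta_0+\delta_k$ satisfies $k_0>0$, or else $k_0=0$ and $\alpha>0$ in $\Delta_{0,\mathrm{en}}$ (the contributions from $Z^{+}$ and $\SS^{+}_{n+1}(m_0,m)$ all have $k_0>0$). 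Hence a vanishing $\delta_0$-coefficient forces every constituent of $\eta$ to have $k_0=0$ and $\alpha>0$, and then a vanishing $\fmh(0)$-component forces the empty sum, i.e.\ $\eta=0$. As $U(\TT^-)_0=\CC$, only the $\eta=0$ term survives and $V_\nu=M_\nu$; in particular $\dim M_k=\dim V_{\bar\lambda+\delta_k}$ whenever $M_k\neq 0$.

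To finish, I would fix $k$ with $M_k\neq 0$ and apply Corollary \ref{cr1} to obtain $w\in\widetilde W$ with $w(\bar\lambda+\delta_k)=\bar\lambda+\delta_l$, where $l_i=k_i'+m_i\bar s_i$, $0\le k_i'<m_i$, $0\le \bar s_i<r_\lambda$. Here $w$ is a product of the translations $t_{i,h}$, each of which is a product of reflections in the real roots $\alpha+r_im_i\delta_i$ ($\alpha\in\Delta_0$) of the loop subalgebra $\fma(\bar 0,\bar 0)\otimes\CC[t_i^{m_i},t_i^{-m_i}]\oplus d_i\subseteq\TT$, so $w$ lies in the Weyl group $W$ of $\TT$. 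Because $\lambda(K_i)=0$ for $1\le i\le n$, each $t_{i,h}$ only shifts the $\delta_i$-component and fixes the $\fmh(0)$-, $K_0$- and $\delta_0$-parts; thus $w$ fixes $\bar\lambda$, and the image $\bar\lambda+\delta_l$ again has the $M$-signature. By $W$-invariance of $P(V)$ it lies in $P(V)$, so by the top-level identification it is a weight of $M$ with $V_{\bar\lambda+\delta_l}=M_l$, and $M_l\in M'$ by the definition of $M'$. Invariance of multiplicities then gives $\dim M_k=\dim V_{\bar\lambda+\delta_k}=\dim V_{\bar\lambda+\delta_l}=\dim M_l\le N$; since $k$ was arbitrary (the case $M_k=0$ being trivial), all weight spaces of $M$ are bounded by $N$.

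The main obstacle is the top-level identification $V_\nu=M_\nu$: it is precisely what converts the $W$-invariant statement about multiplicities in $V$ into a bound on the graded pieces of $M$, and it is here that the exact shape of the positive system of $\TT$ — every positive root raising either the $\delta_0$-direction or the $\fmh(0)$-direction — is essential. A secondary point requiring care is verifying that the reducing element $w$ of Corollary \ref{cr1} genuinely belongs to $W$ and fixes $\bar\lambda$; this rests on the normalization $\lambda(K_i)=0$ for $1\le i\le n$. The remaining manipulations are routine bookkeeping with the $\ZZ^n$-grading.
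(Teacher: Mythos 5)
Your proof is correct and takes essentially the paper's route: the paper's entire proof is the one line ``Follows from Corollary \ref{cr1},'' and your argument is exactly the fleshing-out of that line, namely Corollary \ref{cr1} combined with the $W$-invariance of $P(V)$ and of weight multiplicities. The two details you supply --- the identification $V_{\bar\lambda+\delta_l}=M_l$ (forced by the shape of the positive roots of $\TT$ and $V=U(\TT^-)M$), and the check that the reducing element $w$ lies in the Weyl group of $\TT$ and fixes $\bar\lambda$ because $\lambda(K_i)=0$ --- are precisely what the paper leaves implicit, and you verify them correctly.
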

\begin{proof}
Follows from Corollary \ref{cr1}.
\end{proof}

\begin{ppsn}
There are only finitely many $\widetilde{\fmh}(A(m))$-submodules of $M$.
\end{ppsn}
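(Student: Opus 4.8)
The plan is to exploit the uniform bound $N$ on the weight multiplicities of $M$ together with the graded structure of $\widetilde{\fmh}(A(m))$. First I would coarsen the $\ZZ^n$-grading of $M$ to the finite quotient $\Lambda=\ZZ^n/\Gamma$: since $\Gamma$ has finite index in $\ZZ^n$ and $\widetilde{\fmh}(A(m))$ is $\Gamma$-graded (its elements $h\otimes t^s$ and $t^sK_i$ all carry degrees $s\in\Gamma$), the algebra preserves each coset component $M^{(c)}=\bigoplus_{k\in c}M_k$, $c\in\Lambda$. This writes $M=\bigoplus_{c\in\Lambda}M^{(c)}$ as a direct sum of finitely many $\widetilde{\fmh}(A(m))$-submodules, so it suffices to bound the submodules of each $M^{(c)}$ and to control how a submodule of $M$ can spread across the (finitely many) components.

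The heart of the argument is the observation that $\widetilde{\fmh}(A(m))$ is a $\Gamma$-graded Heisenberg algebra whose centre $\Omega_{A(m)}/d(A(m))$ acts on $M$ by $\widetilde{\fmh}(A(m))$-module endomorphisms of nonzero degree; indeed the degree-zero central elements are precisely the $K_i$, $1\leq i\leq n$, which act as $0$, so the degree-zero part of every bracket $[h\otimes t^r,h'\otimes t^{-r}]$ vanishes and the only non-commutativity sits in the higher central operators. For a central generator $z=t^sK_i$ with $0\neq s\in\Gamma$ the maps $z^j\colon M_k\to M_{k+js}$ are module homomorphisms, so $\ker z^j$ and $\mathrm{im}\,z^j$ are $\widetilde{\fmh}(A(m))$-submodules; because $\dim M_k\leq N$ uniformly, these ascending and descending chains stabilise. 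Running this over a generating set of the centre and combining it with a bounded-multiplicity analysis in the spirit of \cite{CP} should show that each $M^{(c)}$ has finite length over $\widetilde{\fmh}(A(m))$, its composition factors being irreducible modules pinned down by the induced central character and by the finitely many ``fundamental'' graded pieces $M'$ isolated in Corollary \ref{cr1}.

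Finally I would upgrade finite length to the sharper statement that there are only finitely many submodules. This is strictly stronger than finite length: a single subquotient of the form $S\oplus S$ for an irreducible $S$ would already contribute a $\Pa^1$ of submodules, so the crux is to rule out repeated constituents. Here I would use that $M$ is irreducible as a $\TT^0$-module: the larger algebra $\TT^0\supseteq\widetilde{\fmh}(A(m))$ acts on the $\widetilde{\fmh}(A(m))$-isotypic components strongly enough to force each irreducible constituent to occur without multiplicity, so the submodule lattice is generated by finitely many characteristic pieces and is therefore finite. The hard part will be exactly this last step --- establishing the finite-length bound and the multiplicity-freeness of the $\widetilde{\fmh}(A(m))$-constituents simultaneously. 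Unlike \cite{PSER}, where one works with the full derivation algebra, the restriction to the divergence-zero fields $\SS_n(m)$ deprives us of the usual structural inputs, and I expect to have to lean instead on the special features of divergence-zero vector fields recorded in \cite{GL} and \cite{BT} to close this gap.
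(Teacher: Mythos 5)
Your proposal is a strategy rather than a proof: the two pillars it rests on --- a finite-length bound and multiplicity-freeness of the $\widetilde{\fmh}(A(m))$-constituents --- are precisely the steps you defer, and neither can be established along the lines you suggest. Concretely: (i) the claim that the chains $\ker z\subseteq\ker z^{2}\subseteq\cdots$ stabilise ``because $\dim M_{k}\leq N$ uniformly'' is false for an operator of nonzero degree. Take $M=\bigoplus_{k\in\ZZ}\CC e_{k}$ with $z e_{k}=e_{k+1}$ for $k<0$ and $z e_{k}=0$ for $k\geq 0$: every homogeneous component is one-dimensional, yet $\ker z^{j}=\mathrm{span}\{e_{k} : k>-j\}$ grows strictly for all $j$. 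Bounded multiplicities alone give no chain condition, hence no finite length; one needs an input forcing $M$ to look the same in all degrees. (ii) The multiplicity-freeness you hope to force from $\TT^{0}$-irreducibility --- and you correctly identify this as the crux, since $S\oplus S$ carries a $\Pa^{1}$ of submodules --- is in fact unavailable: the paper's own later results (Propositions \ref{thc} and \ref{prpm}) show that $\Omega_{A(m)}/d_{A(m)}$ kills $M$ and that $\fmh(0)\otimes A(m)$ acts by a character composed with degree shifts, so $M\cong M^{1}\otimes A(m)$ and all the ``columns'' $\CC v\otimes A(m)$, $v\in M^{1}$, are isomorphic graded $\widetilde{\fmh}(A(m))$-modules; likewise the theorem in Section \ref{sec7} exhibits $\widetilde V$ as completely reducible with \emph{isomorphic} irreducible components. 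So the last step of your plan is not merely hard, it cannot hold. (A smaller issue: $\widetilde{\fmh}(A(m))$ contains no degree derivations, so its submodules need not be graded, and your opening decomposition $M=\bigoplus_{c\in\Lambda}M^{(c)}$ does not account for all submodules.)

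The idea your proposal is missing is the one the paper's one-line proof actually uses: Corollary \ref{cr1}. The paper argues that any $\widetilde{\fmh}(A(m))$-submodule $S$ is generated by vectors that, via the Weyl-translation argument behind Corollary \ref{cr1}, can be taken inside the single fixed finite-dimensional space $M'$ (the weight spaces $M_{l}$ with $l_{i}=k_{i}+m_{i}\bar s_{i}$, $0\leq k_{i}<m_{i}$, $0\leq\bar s_{i}<r_{\lambda}$). This confinement of generators to one finite-dimensional subspace --- not any length or isotypic analysis --- is the entire content of the paper's argument; it is exactly the homogeneity input that excludes the pathology in (i), and it pins a submodule down by its intersection with $M'$, which in particular produces the minimal (hence irreducible) submodule $S_{\min}$ that the subsequent proof of Proposition \ref{thc} actually requires. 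Your text mentions Corollary \ref{cr1} only in passing, as a label for the pieces of $M'$, but never invokes the translation mechanism; without it, the plan cannot be closed, and the substitutes you point to (\cite{GL}, \cite{BT}) enter the paper only later, for Proposition \ref{prpm} and for Section \ref{sec7}, not here.
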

\begin{proof}
Any such module $S$ is generated by vectors of $M$, which can be taken from the generators of $M'$
by corollary \ref{cr1}.
\end{proof}

Now as number of  $\widetilde{\fmh}(A(m))$-modules are finite consider minimal such module say 
$S_{\text{min}}$. Clearly $S_{\text{min}}$ is an irreducible $\widetilde{\fmh}(A(m))$-module
and consider $(U(\fma(\bar{0}, \bar{0})) \oplus \Omega_{A(m)}/d_{A(m)}) S_{\text{min}}$ which is an integrable module with top part irreducible as $\widetilde{\fmh}(A(m))$-module. Now consider the irreducible quotient of above module. Notice that top part goes injectively to the quotient. It follows by \cite{R} Remark 5.5
 that $ \Omega_{A(m)}/d_{A(m)}$ acts trivially  on $S_{\text{min}}$. Now consider
$\bar{M} = \{v \in M |  \Omega_{A(m)}/d_{A(m)} v = 0\}$. Then it is easy to see that $\bar{M}$ is a
$\TT^0$-module. But this forces $M = \bar{M}$ by irreducibility of $M$ as $\TT^0$-module and we have the following:
\begin{ppsn} \label{thc}
 $\Omega_{A(m)}/d_{A(m)} $ acts trivially on $M$. 
\end{ppsn}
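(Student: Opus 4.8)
The plan is to reduce the statement to the classification of integrable modules of (untwisted) toroidal Lie algebras in \cite{R}, applied to the degree-zero toroidal subalgebra $\fma(\bar 0,\bar 0)\otimes A(m)\oplus \Omega_{A(m)}/d_{A(m)}$ acting on $M$, and then to propagate triviality of the central part to all of $M$ by irreducibility. First I would use the two finiteness facts established just above: the weight spaces of $M$ are uniformly bounded, and there are only finitely many $\widetilde{\fmh}(A(m))$-submodules of $M$. Choosing a minimal such submodule $S_{\mathrm{min}}$, minimality makes it an irreducible $\widetilde{\fmh}(A(m))$-module, which is the object on which I want to kill the center first.

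Next I would let the toroidal Lie algebra $\fma(\bar 0,\bar 0)\otimes A(m)\oplus \Omega_{A(m)}/d_{A(m)}$ act and form the submodule it generates from $S_{\mathrm{min}}$. This module is integrable and has $S_{\mathrm{min}}$ as its top part as a $\widetilde{\fmh}(A(m))$-module. Passing to its irreducible quotient, the top part injects, so an isomorphic copy of $S_{\mathrm{min}}$ survives. Then \cite{R}, Remark 5.5 — which asserts that the center $\Omega_{A(m)}/d_{A(m)}$ of a toroidal Lie algebra acts trivially on any irreducible integrable module with finite-dimensional weight spaces — applies to this quotient and forces $\Omega_{A(m)}/d_{A(m)}$ to annihilate the embedded copy of $S_{\mathrm{min}}$, hence $S_{\mathrm{min}}$ itself.

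Finally I would spread this triviality across all of $M$. Put $\bar M=\{v\in M : \Omega_{A(m)}/d_{A(m)}\,v=0\}$. The essential point is that $\Omega_{A(m)}/d_{A(m)}$ is an abelian ideal of $\TT^0$, i.e. $[\TT^0,\Omega_{A(m)}/d_{A(m)}]\subseteq \Omega_{A(m)}/d_{A(m)}$; this follows from a direct bracket computation, using centrality of the $K_i$ with respect to $LT^0$ and the facts that the relevant degrees satisfy $r_0=0$ and $s_0=0$ so that no $t_0$-direction or $K_0$-contribution escapes. Consequently, for $v\in\bar M$, central $z\in\Omega_{A(m)}/d_{A(m)}$, and $x\in\TT^0$, one has $z(xv)=[z,x]v+x(zv)=0$, so $\bar M$ is a $\TT^0$-submodule; it is nonzero since it contains $S_{\mathrm{min}}$, and therefore $\bar M=M$ by irreducibility of $M$ as a $\TT^0$-module, which is exactly the assertion.

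The step I expect to be the main obstacle is the middle one: verifying that the module generated from $S_{\mathrm{min}}$ is genuinely integrable with finite-dimensional weight spaces and that its top part injects into the irreducible quotient, since only under these hypotheses does \cite{R}, Remark 5.5 apply and descend back to $S_{\mathrm{min}}$. The uniform weight bound and the finiteness of $\widetilde{\fmh}(A(m))$-submodules proved above are precisely what legitimize this construction, so the care lies in confirming those hypotheses rather than in taking the reduction to the toroidal case for granted.
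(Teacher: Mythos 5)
Your proposal is correct and follows essentially the same route as the paper's own proof: a minimal $\widetilde{\fmh}(A(m))$-submodule $S_{\mathrm{min}}$, the integrable module generated from it over $\fma(\bar{0},\bar{0})\otimes A(m)\oplus\Omega_{A(m)}/d_{A(m)}$, passage to the irreducible quotient where the top part injects so that \cite{R} Remark 5.5 applies, and then propagation to all of $M$ via the $\TT^{0}$-submodule $\bar{M}=\{v\in M : \Omega_{A(m)}/d_{A(m)}\,v=0\}$ and irreducibility. Your explicit bracket verification that $\Omega_{A(m)}/d_{A(m)}$ is an ideal of $\TT^{0}$ merely fills in the paper's ``it is easy to see'' step.
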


Using above we further identify $\mathcal{T}^{0}$ as the Lie algebra $\mathfrak{L} =
LT^{0} \oplus \SS_{n}(m) \oplus  \displaystyle{\sum_{r \in \Gamma}}{\CC t^{r}d_0} \oplus  \displaystyle{\sum_{r \in \Gamma}}{\CC t^{r}K_0}$. Then $M$ is an irreducible $\mathfrak{L}$-module.

\subsection{}

Let for $k \in \ZZ^n$ with $k = (k_1, \ldots, k_n)$, define $M_{0}' = \bigoplus_{0 \leq k_i < m_i}{M_{k}}$, and $M_{r}' = \bigoplus_{r_i \leq k_i < m_i + r_i}{M_{k}}$.
Then $M = \oplus_{r \in \Gamma}{M_{r}'}$ is $\Gamma$-graded. We have the following from \cite{GL}.
\bthm \label{thi}
 Consider $\mathfrak{L}$-irreducible module $M$ with $\Gamma$ grading. Then either $t^s K_0$ acts injectively  on $M$ for all $0 \neq s \in \Gamma$ or $t^s K_0$ acts trivially on $M$ for all $0 \neq s \in \Gamma$.
\ethm
\begin{proof}
 For $D(u,r) \in \SS_n (m)$, we have $[D(u,r), t^s K_0] = (u|s) t^{r+s} K_0$.
As in our case $n \geq 2$, for $s \in \Gamma$, $s \neq 0$, we can always find $u \in \mathbb{C}^{n+1}$ and $r \in \mathbb{Z}^{n+1}$ with $r_0 = 0$, such that $(u| r) = 0$, and
$(u|s) \neq 0$. Now using the same argument as Proposition 3.4 of \cite{GL} we get the result. Note that the additional spaces $ \text{LT}^0$ and $\sum_{r \in \Gamma}{\mathbb{C} t^{r} d_0}$ don't create any problem as they commute with $t^{s} K_0$.
\end{proof}
\bppsn \label{prpm}
Let $r, s \in \Gamma$, $k \in \ZZ^n$ and $0 \neq v \in M$. Then
\begin{enumerate}
\item[{(a)}] $t^r K_0 v \neq 0$ for all $r \in \Gamma$.
\item[{(b)}]  $t^r K_0 t^s K_0 = \lambda_{r, s}  t^{r+s} K_0$ on $M$,
where $ \lambda_{r, s} = \lambda$ for all $r \neq 0, s \neq 0, r+s \neq 0$, $\lambda_{r, -r} = \mu$ for all $r \neq 0$ and $\lambda_{0, r} = C_0$ for all $r \in \Gamma$. Further we have
$\mu C_0 = \lambda^2 \neq 0$.
\item[{(c)}] $\text{dim}\,\, M_k =  \text{dim}\,\, M_{k+r} = p_k \,\, \forall r \in \Gamma$. Suppose $v_1(k), \ldots, v_{p_k}(k)$ is a basis for $M_k,$ where $0 \leq k_i < m_i$. Let
$v_i(k+s) = \frac{1}{\lambda} v_i (k), \,\, \forall i, \,\, \forall s \neq 0$. Then $v_1 (k+s), \cdots, v_{p_k}(k+s)$ is a basis for $M_{k+s}$.
\item[{(d)}] For $0 \neq r \in \Gamma, t^{r} K_0 (v_1(k+s), \cdots, v_{p_k}(k+s)) = \lambda(v_1(k+s+r), \cdots, v_{p_k}(k+s+r)) $.
\item[{(e)}] Recall that $\fma(\bar{0},\bar{0})$ is a simple Lie algebra of fixed points with $\fmh(0)$ be its Cartan subalgebra with ${\alpha}_{1}^{\vee}, \ldots ,{\alpha}_{l}^{\vee}$ be its simple co-roots. There exists $\alpha \in \fmh(0)^{*}$ such that
$h(r)(v_1(k+s), \cdots, v_{p_k}(k+s)) = \alpha(h)(v_1(k+s+r), \cdots, v_{p_k}(k+s+r))$, $\forall r \neq 0$; $\alpha(\alpha_{i}^{\vee}) = 0$ iff $\lambda(\alpha_{i}^{\vee}) = 0$.
\item[{(f)}] $t^{r}d_0(v_1(k+s), \cdots, v_{p_k}(k+s)) = \Lambda_d(v_1(k+s+r), \cdots, v_{p_k}(k+s+r))$ for all $0 \neq r \in \Gamma,  k \in \ZZ^{n}, \Lambda_d \in \mathbb{C}$ and $d_0 (v(k)) = \Lambda(d_0)v(k)$.

\end{enumerate}
\eppsn
\begin{proof}
Let us assume that $t^{s} K_0 = 0$ on $M$ for all $0 \neq s \in \Gamma$. Then consider the highest root $\theta$ of $\fma(\bar{0},\bar{0})$ and $x_{\theta}$ and $x_{-\theta}$ be elements
of $\fma(\bar{0},\bar{0})_{\theta}$ and $\fma(\bar{0},\bar{0})_{- \theta}$ such that $\langle x_{\theta}, x_{-\theta}\rangle = 1$. Then it is easy to see that 
$x_{- \theta} \otimes t_0$ and $x_{\theta} \otimes t_0^{-1}$ and $h_{\theta} + K_0 $ form a $\mathfrak{sl}_2$ copy.  Consider 
the Lie algebra $\mathfrak{sl}_2 \otimes \mathbb{C}[t_1, t_1^{-1}]$ with bracket:
$[(x_{- \theta}\otimes t_0) \otimes t_1^{r_1}, (x_{\theta} \otimes t_{0}^{-1}) \otimes t_1^{s_1}] = (- h_{\theta} + K_0) \otimes t_{1}^{r_1 + s_1} = - h_{\theta} \otimes t_{1}^{r_1 + s_1} + K_0 \otimes t_{1}^{r_1 + s_1}$.

Now let us look at the bracket in our setting
$[x_{- \theta} \otimes t_0 t_{1}^{r_1}, x_{\theta} \otimes t_{0}^{-1} t_{1}^{s_1}] = - h_{\theta}\otimes t_1^{r_1 + s_1} + t_{1}^{r_1 + s_1} K_0 + r_1 t_{1}^{r_1 + s_1} K_1$.
But as $r_1 t_{1}^{r_1 + s_1} K_1$ acts trivially on $V$, we identify $K_0 \otimes t_1 ^{r_1 + s_1}$ with $ t_1 ^{r_1 + s_1} K_0$. 
Now as $V$ is an integrable $\TT$ module for $\TT$ in particular for $L(\mathfrak{sl}_2) = \mathfrak{sl}_2 \otimes \mathbb{C}[t_1, t_1^{-1}]$ and
every element of $M$ is highest weight vector for $L(\mathfrak{sl}_2)$. Now following the notations in \cite{CP}:
for any positive root $\beta$ of $ \fma(\bar{0},\bar{0})$, $\Lambda_{\beta}^{\pm}(u) = \sum_{m = 0}^{\infty}{\Lambda_{\beta, \pm m} u^{m}} = \mathrm{exp}\big(-\sum_{k =1}^{\infty}{\frac{h_{\beta, \pm k}}{k} u^{k}}\big)$,
where $h_{\beta} \in \fmh(0)$ and $h_{\beta, \pm k} = h_{\beta} \otimes t^{\pm k}$.  
Let $\delta_0$ be the corresponding element of $\HH^*$ with respect to $K_0$. 
$\theta^{'} = - h_{\theta} + K_0$. So $K_0 = - h_{\theta} + K_0 + h_{\theta}$.  Then by Proposition 1.1
of \cite{CP} 
we have $\Lambda_{\theta^{'}, m} v = 0$ for all $ m > \lambda(\theta')$. Similarly $\Lambda_{\theta, m} v = 0$ for all
$m > \lambda(\theta)$ and $\Lambda_{\theta^{'}, m} v \neq 0$ for $ m = \lambda(\theta')$ and
$\Lambda_{\theta, m} v \neq 0$ for $ m = \lambda(\theta)$. But as $\Lambda_{\delta_0, m} = \sum_{m_1 + m_2 = m}{\Lambda_{\theta', m_1} \Lambda_{\theta, m_2}}$.  From above it follows that
$\Lambda_{\delta_0, \lambda(k_0)} = \Lambda_{\theta', \lambda(\theta')} \Lambda_{\theta, \lambda(\theta)}$.  But  as by assumption $t^{s} K_0 = 0$ on $M$ for all
$0 \neq s  \in \Gamma$. So $\Lambda_{K_0, m} = 0$ for all $m >0$. But as RHS of the above equation is non-zero, this forces that $\lambda(K_0) = 0$ which is a contradiction. So $t^{s}K_0$ acts injectively on $M$ for all $0 \neq s \in \Gamma$.
By Theorem \ref{thi} we have
$t^{s}K_0 v \neq 0$, for any nonzero $v \in M$ and $\forall s \in \Gamma$ (for $0 =s \in \Gamma, K_0$ acts by non-zero scalar $C_0$) and $\mathrm{dim}M_{k} = \mathrm{dim}M_{k+s} = p_{k}$, $\forall s \in \Gamma$.
If $\{v_1, \ldots, v_{p_{k}}\}$ is a basis of $M_k$ then $\{v_1(k+s), \ldots, v_{p_k}(k+s)\}$ is a basis for
$M_{k+s}$, where $\frac{1}{\lambda}t^{s}K_0 v_i (k) = v(k+s)$ for all $0 \neq s \in \Gamma$ and $t^{s}K_0(v_1 (k+r), \ldots, v_{p_{k}}(k+r)) = \lambda(v_1 (k+r+s), \ldots, v_{p_{k}}(k+r+s)), \,\, \forall s \in \Gamma, s \neq 0$. It follows that $M \cong \Bigg( \displaystyle{\bigoplus _{\substack{0 \leq k_i < m_i \\ 1 \leq i \leq n}}}{M_{k}}\Bigg) \otimes A(m)$. Let us denote $M^1 = \displaystyle{\bigoplus _{\substack{0 \leq k_i < m_i \\ 1 \leq i \leq n}}{M_{k}}}$ and
denote $v(0) = v$ for $v \in M^1$. Let $\frac{t^{r}K_{0}}{\lambda}v(0) = v(k), r \neq 0, r \in \Gamma$.
Now (b) follows trivially. From
(b) it follows that $\frac{t^{s}}{\lambda} v(r) = v(r+s)$.
Now for fixed $i$, let $\alpha_i^{\vee}$ be a coroot of $\fmh(0)$. As $\Omega_{A(m)}/d_{A(m)}$ acts trivially on $M$ by Proposition \ref{thc}, $\fmh(0) \otimes A(m)$ is abelian on $M$ and as $D(u, r) \alpha_i^{\vee} (s) = (u|s) \alpha_i^{\vee}(s+r)$, from \cite{GL} it follows that either $\alpha_i^{\vee} (s)$ is zero for all $s \neq 0, s \in \Gamma$ or  $\alpha_i^{\vee} (s)$ is injective for all $s \neq 0, s \in \Gamma$ . But as in previous case using \cite{CP} it follows that $\alpha_i^{\vee}$ acts trivially when $\alpha_i^{\vee}(s)$ acts trivially for all $0 \neq s \in \Gamma $. Hence we can assume that 
$\alpha_i^{\vee} (s)$ acts injectively for all $s \in \Gamma$. Now consider a non-zero scalar $b_i \in \mathbb{C}$.
Then $D(u,r)(\alpha_i^{\vee} (s) - b_i t^s K_0) = (u|s)(\alpha_i^{\vee} (s) - b_i t^s K_0)$ for $u \in \mathbb{C}^n, r, s \in \Gamma, r,s \neq 0, (u|r) = 0$. Then again by \cite{GL} it follows that
$\alpha_i^{\vee} (s) - b_i t^s K_0$ is either zero or injective for all $s \in \Gamma$. Now for $0 \neq r \in \Gamma$,
consider the nonzero operator $t^{-r}K_0 \alpha_{i}^{\vee}(r)$ on $M_{0}$. Then there exists $0 \neq v \in M_0$ such that 
$t^{-r}K_0 \alpha_{i}^{\vee}(r) v = \lambda_i v $ for some $0 \neq \lambda_i \in \mathbb{C}$. Then operating 
$t^{r}K_0$ both side we get $\mu C_0 \alpha_i^{\vee} (r)  v = \lambda_i t^{r} K_0 v$. Now since $\mu C_0 = \lambda ^{2} \neq 0$, we get $\alpha_i^{\vee}(r)v - \frac{\lambda_i}{\mu C_0} t^r K_0 v = 0$ and so by earliar discussion we have
$\alpha_i^{\vee}(r)v = \frac{\lambda_i}{\mu C_0} t^r K_0 v = \frac{\lambda_i \lambda}{\mu C_0}v(r) = \frac{\lambda_i}{\lambda}v(r)$  for all $0 \neq r \in \Gamma$. Now defining $\alpha : \fmh(0) \mapsto \CC$ by $\alpha(\alpha_{i}^{\vee} ) = \frac{\lambda_i}{\lambda}$ for $1 \leq i \leq n$, we get the desired result. Similarly (f) follows as $[D(u,s), t^r d_0] = (u | r) t^{r+s}d_0$, hence by similar argument as
(e), we get the desired result.
\end{proof}
\subsection{}
 Now using $(d)$ of above proposition we  identify $ t^{r}K_0$ with $\frac{1}{\lambda}t^{r}$ for $r \in \Gamma$.  Hence $M^{1} \otimes A(m)$ becomes an irreducible module for the 
space $\mathfrak{L} = LT^{0} \oplus \SS_{n}(m) \oplus  \displaystyle{\sum_{r \in \Gamma}}{\CC t^{r}d_0} \oplus A(m)$.
Now we will rewrite $LT^{0}$ for our convenience: for that let
$\gg(0) = \{X \in \fma \,\,\, |\,\,\, \sigma_0(X) = X, [h, X]= 0, h \in \fmh(0)\}$. Then $\gg(0) \neq 0$ as $\fmh(0) \subseteq
\gg(0)$ and $\sigma_i(\gg(0)) \subseteq \gg(0) $. So $\gg(0)$ preserves $\Lambda$- grading and let
$\gg(0) = \displaystyle{\bigoplus_{\bar{k} \in \Lambda}}{(\gg{(0)})_{\bar{k}}}$ and let
$L\big(\gg(0), \sigma\big) = \displaystyle{\bigoplus_{k \in Z^{n+1}}}{(\gg(0))_{\bar{k}}\otimes t^{k}}$ be the corresponding multiloop algebra. Then it is easy to see that $LT^{0} = L\big(\gg(0), \sigma\big)$.
Now consider the subspace $W$ of $M = M^{1} \otimes A(m)$ spanned by $\{v(r)- v(0) : v\in M^{1}, r \in \Gamma \}.$
Then $W$ is module for $L\big(\gg(0), \sigma\big) \rtimes \SS_{n} '(m)\ \oplus A(m) \oplus \displaystyle{\sum_{r \in \Gamma}}{\CC t^{r}d_0}$, where $\SS_{n}'(m) = \mathrm{span} \{D(u,r) - D(u,0) : u \in \mathbb{C}^n, r\in \Gamma, (u|r) =0\}$. Let us consider $\widetilde{V} =( M^{1} \otimes A(m))/W$. By previous argument 
 $M/W$ is a finite dimensional $ L\big(\gg(0), \sigma\big) \rtimes \SS_{n} '(m)\oplus A(m) \oplus \displaystyle{\sum_{r \in \Gamma}}{\CC t^{r}d_0}$.
 As $ A(m) \oplus  \displaystyle{\sum_{r \in \Gamma}}{\CC t^{r}d_0}$ act as scalars on $\widetilde{V}$,
we neglect them and consider $\widetilde{V}$ as $\mathcal{I} :=  L\big(\gg(0), \sigma\big) \rtimes \SS_{n} '(m)$-module.
Now define $L(V') = V' \otimes A_{n}$, where $V'$ is any $\I$ module. Let fix $\gamma \in P(V)$, where $V$ is an integrable irreducible module for $\mathcal{T}$ with finite dimensional weight spaces. Let $\beta_i = \gamma(d_i)$ for $1 \leq i \leq n$, let $\beta = (\beta_1, \ldots \beta_n) \in \mathbb{C}^n$. Let $\mu \in \mathbb{C}^n$ be arbitrary. Let $V'$ be any $\I$-module. Then $L(V')$ is a $\mathfrak{L}$-module by the following
action:
\begin{align*}
X(k).v \otimes t^{s} & =   (X(k)v) \otimes t^{k+s} ;\\
0 \neq r \in \Gamma, D(u,r).(v \otimes t^{s}) & =   (I(u,r)v) \otimes t^{r+s} + (u| s+\mu)v \otimes t^{s+r};\\
t^{l}.v \otimes t^s  & =  v \otimes t^{s+l};\\
t^{l}d_0. v \otimes t^{s} & = \gamma(d_0).v \otimes t^{t+s}\\
D(u, 0) v \otimes t^{l} & = (u|\beta) v \otimes t^{l}\\
\frac{t^{r}}{\lambda}K_{0}. v \otimes t^{l} & = v \otimes t^{r +l}, r \neq 0, r\in \Gamma \\
\frac{K_0}{C_0}v \otimes t^{l} & = v \otimes t^{l}.
\end{align*}
where $v \in V, s ,l \in \Gamma, u\in \CC^{n}, r\in\,\,  \Gamma \,\,\, \text{with}\,\, (u|r) = 0$.

It is easy to check that $L(V')$ is an $\mathfrak{L}$-module (see 8.3 of \cite{PSER}). Now we take $\widetilde{V} = M/W$. Then by above $L(\widetilde{V})$ is an $\mathfrak{L}$-module. Consider a map $\widetilde{\phi}: M \mapsto L(\widetilde{V})$ by $\widetilde{\phi}(v_k) = \bar{v}_{k}\otimes t^{k}, k \in \mathbb{Z}^{n}$, where
$v_{k} \in M_{k}, \bar{v}_{k}\in M/W$. It follows that $\widetilde{\phi}$ is a  non-zero $\mathfrak{L}$-module map. Since $M$ is an irreducible $\LL$-module, it follows that $\widetilde{\phi}$
is injective. In particular $\widetilde{\phi}(M) \subseteq L(\widetilde{V})$. Let us define for a fixed $p \in \Lambda$. 
$L(\widetilde{V})(\bar{p}) = \{ \bar{v}_{k} \otimes t^{k+r+p},  r \in \Gamma, k \in \mathbb{Z}^{n}\}$. Then $L(\widetilde{V}) = \bigoplus_{\bar{p} \in \Lambda}{L(\widetilde{V})(\bar{p})}$
Then $M \cong L(\widetilde{V})(\bar{0})$. In particular $L(\widetilde{V})(\bar{0})$ is an irreducible $\LL$-module (see section 8 of \cite{PSER} for details).
\section{} \label{sec7}
\subsection{}
 In order to get hold of $M$, by previous section, we need to understand $\widetilde{V}$ as $\mathcal{I}$-module. We devote this section for this purpose. 
Let $\gg(0) = \gg(0)_{\text{ss}} \oplus R$, where $\gg(0)_{\text{ss}}$ and $R$ are Levi  and
radical part of $\gg(0)$. Then as $\sigma_i (\gg(0)_{\text{ss}}) = \gg(0)_{\text{ss}}$ and $\sigma_i (R) = R$ for $1 \leq i \leq n$, we have $L(\gg(0), \sigma) = L(\gg(0)_{\text{ss}}, \sigma) \oplus L(R, \sigma)$.  Now we invoke result from \cite{PSER}
\bthm
$\widetilde{V}$ is completely reducible as $\mathcal{I}$ module with isomorphic irreducible components.
\ethm
\begin{proof}
The proof of this theorem is exactly similar to Theorem 8.3 of \cite{PSER}. 
\end{proof}
In the light of above proposition it is enough to consider an irreducible representation of $\I$.
Let $(W, \pi)$ be a finite dimensional irreducible representation of $L(\gg(0), \sigma) \rtimes \SS_n'(m) = \mathcal{I}$.
Let $\pi(L(\gg(0), \sigma)) = \gg''$. As $\pi(\mathcal{I})$ is reductive with atmost one dimensional center, there exists a unique compliment $\gg$ in $\pi(\mathcal{I})$. So $W$ is an irreducible module for $\gg' \oplus \gg''$. Then it is standard that $W \cong W_1 \otimes W_2$, where $W_1$ and $W_2$ are irreducible modules for $\gg'$ and $\gg''$ respectively. Now we concentrate on $W_2$ as $L(\gg(0), \sigma) = L(\gg(0)_{\text{ss}}, \sigma) \oplus L(R, \sigma)$. Now as $R$ is solvable ideal, it follows that $\pi(L(R, \sigma))$
lies in the center of $\pi(\mathcal{I})$ which is atmost one dimensional. Hence $L(R, \sigma)$ acts as a scalar on $W$. In particular $V_2$ is an irreducible module for $L(\gg(0)_{\text{ss}}, \sigma)$. Now we recall result from \cite{NSS} for which we need to introduce some notations: For each $i, 1 \leq i \leq n$, and a fixed positive integer $l$, let $\underline{a}_i = (a_{i ,1}, a_{i, 2}, \cdots, a_{i ,l})$ such that
$a_{i ,j}^{m_i} \neq a_{i ,t}^{mi}$ for $j \neq t$ (**). Let $\bar{\gg}$  be a finite dimensional semisimple Lie algebra. Let $\sigma_1, \sigma_2, \cdots, \sigma_n$  be finite order automorphisms on $\bar{\gg}$ of order $m_1, m_2, \cdots m_n$  respectively. Let $L(\bar{\gg}, \sigma)$ be corresponding multiloop
algebra. Let $I = \{(i_1, i_2, \ldots, i_n)| 1 \leq i_j \leq l \}$. Let for $S \in I$, $S = (i_1, i_2, \ldots, i_n)$ and $r = (r_1, r_2, \cdots, r_n) \in \ZZ^{n}$, $a_{S}^{r} = a_{1, i_1}^{r_1}a_{2, i_2}^{r_2}\cdots a_{n, i_n}^{r_n}$. Now consider the evaluation map
$\phi:\bar{\gg} \otimes A \mapsto \bigoplus{\bar{\gg}} ({nl} \,\, \text{copies})$, $\pi(X \otimes t^{r})=
(a_{I_1}^{r}(X), a_{I_2}^{r}(X)), \ldots, a_{I_{nl}}^{r}(X)))$, where $I_1, I_2, \ldots, I_{nl}$ is some order on
$I$. Consider restriction of $\phi$ to $L(\bar{\gg}, \sigma)$. We have the following theorem which follows from \cite{NSS}:
\bthm
 Let $W'$ be a finite dimensional irreducible representation of $L(\bar{\gg}, \sigma)$. Then represention factors though $\bigoplus{\bar{\gg}} ({nl} \,\, \text{copies})$.
\ethm
\begin{proof}
See \cite{NSS} Corollary 6.1 and the discussion thereafter. Note that some factors in  $\bigoplus{\bar{\gg}} ({nl} \,\, \text{copies})$ may act trivially and we discard them.
\end{proof}
From above result it follows that $\bigoplus{\gg(0)_{\text{ss}}} \cong \gg''_{\text{ss}}$ as both can realised as the quotient $L({\gg(0)_{\text{ss}}}, \sigma)/\text{Ker}(\pi_1)$. So we have 
$\pi(L(\gg(0), \sigma) \rtimes \SS_n'(m)) = \oplus{\gg(0)_{\text{ss}}} \bigoplus \mathbb{C} \bigoplus \gg''$.
Now we choose $i$-th  piece of $\bigoplus{\gg(0)_{\text{ss}}}$ and choose projection of the map $\pi$,
say $\pi_i$ onto it, i.e., $\pi_i (L(\gg(0), \sigma) \rtimes \SS_n'(m)) \mapsto \gg(0)_{\text{ss}}$.
Then we claim that $\pi_i (S_n' (m)) = \{0\}$. For consider $[D(u,r) - D(u,0) , X(k+s)] = (u| k+s) X(k+s+r) - X(k+s)$, where $k \in \ZZ^{n}, s, r \in \Gamma$. Apply $\pi_i$ both side we get
$[\pi_i (D(u,r) - D(u,0)) , a_{I_{i}}^{k+s}X] = (u | k+s)a_{I_{i}}^{k+s}(X)(a_{I_{i}}^{r} - 1)$. 
Now cancelling $a_{I_{i}}^{k+s}$ both side we get
$[\pi_i (D(u,r) - D(u,0)) , X] = (u | k+s)(a_{I_{i}}^{r} - 1)(X).$ Now as LHS is independent of $s$, this forces that $\pi_i (D(u,r) - D(u,0)) = 0$. We deduce that $a_{I_{i}}^{r} = 1$ for all $r \in \Gamma$. From this it
follows that $\pi_i (X(k+s) - X(k)) = 0$. As these equations take place in semisimple Lie algebra which has no center, hence the claim follows.
Now in view of condition (**) we see that the representation of $L(\gg(0)_{\text{ss}}, \sigma)$ factors through only one copy of $L(\gg(0)_{\text{ss}}$, i.e., $\gg(0)_{\text{ss}} \cong \gg'_{\text{ss}}$.
\subsection{}
Now we turn our attention towards finite dimensional  irreducible modules for $\SS_{n}'$. For clarity
we once again recall the notations: $A_n = A = \mathbb{C}[t_1^{\pm1}, \cdots, t_n^{\pm 1}]$, and
$\SS_n = \{ D(u, r) : u \in \mathbb{C}^n, r \in \ZZ^n, (u|r) = 0\}$, where $D(u,r) = \sum_{i =1}^{n}{u_i t^{r} t_i \frac{d}{dt_i}}$, where $u = (u_1, \ldots, u_n) \in \mathbb{C}^n$ and $r = (r_1, \ldots, r_n)\in \ZZ^{n}$ and $\SS_n ' = \text{span} \{D(u,r) - D(u,0)| u \in \mathbb{C}^n, r \in \ZZ^n, (u|r) =0\}$. Then $\SS_n '$ is a Lie algebra with brackets: $[I(v,s), I(u,r)] = (u,s)I(v,s) - (v,r)I(u,r) + I(w, r+s)$ where $w = (v,r)u - (u,s)v, v,u \in \mathbb{C}^n,  r, s \in \ZZ^{n}$ and $I(u,r) = D(u,r) - D(u,0)$. Now consider
the Lie algebra $\SS_n \ltimes A$. Our aim is try to understand the relation between finite dimensional $\SS_n '$ module and modules with $\SS_n \ltimes A$ with finite dimensional weight spaces. For $\SS_n \ltimes A$, the subalgebra $\{d_i = t_i \frac{d}{dt} : 1 \leq i \leq n\}$ plays the role of a Cartan subalgebra.  A module for $\SS_n \ltimes A$ we mean $A$ action is associative. In particular $t^0 = 1$
acts as identity. The action of $t^r$ is invertible for $r \in \ZZ^n$. We recall a Theorem of \cite{BT}. Let $\tilde{V}$ be an irreducible module for $\SS_n  \otimes A$ with finite dimensional weight spaces. Let $W'$ be an $\mathfrak{sl}_n$
module and extend to $\mathfrak{gl}_n$ by letting $I$ act trivially. Let $E_{ab}$ be generators for 
$\mathfrak{gl}_n$.  Let $I = \sum_{i =1}^{n} {E_{aa}}$. 
Note: Suppose $I(v,s)$ is replaced by $I(v,s) + (v,\gamma)$ for some $\gamma \in \mathbb{C}^n$. Then these elements also satisfy above bracket. We make the following observations:
\begin{enumerate}
\item Suppose $W$ is a finite dimensional module for $\SS_n '$. Then $(\pi_{\alpha, \beta}, W \otimes A = L(W))$ is a $\SS_n \ltimes A$ module in the following way:\\
for $r \neq 0, D(u,r)(w \otimes t^{k}) = (I(u,r) w) \otimes t^{r + k} + (u| \beta + k) w \otimes t^{r+k}$ ;\\
$D(u,0)(w \otimes t^{k}) = (u| \alpha + k) w \otimes t^{k}$.
\item Suppose $V = \bigoplus_{k \in \ZZ^n}{V_{\alpha + k}}$ is an weight module for $S_n \ltimes A$ such that
$D(u,0)(w \otimes t^{k}) = (u |\alpha + k) w \otimes t^{k}, v \in V_{\alpha + k}$. Put $t^{r} V_{\alpha} = V_{\alpha + r}$. Then $V = V_{\alpha} \otimes A$.
$D(u,0) v\otimes t^{k} = (u| \alpha + k) v\otimes t^{k}, v \in V_{\alpha}$. Let $W = \{ v \otimes t^k - v(0)| v\in V_{\alpha}\}$. It can be checked to be $\SS_n '$-module.
Now consider a $\SS_n '$-module $\overline{V} = \frac{V_{\alpha} \otimes A }{W}$. Let us denote it by $(\theta, \overline{V})$. Define $(\theta_{\gamma}, \overline{V})$ $\SS_n '$-module
$(\theta_{\gamma} I(u,r))v = (\theta(I(u,r))v + (u, \gamma) v$ for $\gamma \in \mathbb{C}^n$. Thus 
$\theta_0 = \theta$. 

The following can be checked:
\begin{enumerate}
\item Suppose $(\theta, W)$ is a $\SS_n '$-module., where $W$ is finite dimensional. Let $(\pi_{\alpha, \beta}, L(W))$ is a $\SS_n \ltimes A$-module. Then $(\theta_{\alpha - \beta} , \overline{L(W)}) \cong (\theta, W)$ as $\SS_n '$-module.

\item Suppose $(\pi, W \otimes A)$ is $\SS_n \ltimes A$-module with $\text{dim} W < \infty$ with
$D(u,0) v(k) = (u| \alpha + k) v(k)$.  Let $(\theta_{\mu}, \overline{W \otimes A})$ be $\SS_n '$ - module.
Then $(\pi_{\alpha, \alpha- \mu} , L(\overline{W \otimes A})) \cong (\pi, W \otimes A)$ as 
$\SS_n \ltimes A $-module.
\end{enumerate}

\end{enumerate}
The following are easy to prove:
\begin{lmma}
Suppose $(\theta, W)$ is a finite dimensional irreducible $\SS_n'$- module. Then $(\pi_{\alpha, \beta}, L(W))$ is an irreducible $\SS_n \ltimes A$-module.
\end{lmma}

\begin{lmma}
Suppose $(\pi, L(W)$ is an irreducible $\SS_n \ltimes A$- module  with dim $W < \infty $. Then $(\theta_{r}, \overline{L(W)})$ is an irreducible finite dimensional module for $\SS_n '$
\end{lmma}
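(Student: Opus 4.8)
The statement asserts that if $(\pi, L(W))$ is an irreducible $\SS_n \ltimes A$-module with $\dim W < \infty$, then $(\theta_r, \overline{L(W)})$ is an irreducible finite dimensional $\SS_n'$-module. The plan is to exploit the tight correspondence set up in observation (2) above, which already records that $(\pi_{\alpha,\alpha-\mu}, L(\overline{W\otimes A}))\cong(\pi, W\otimes A)$ as $\SS_n\ltimes A$-modules. The idea is that the two constructions $W\mapsto L(W)$ and $V\mapsto \overline{V}$ are mutually inverse (up to the twist by $\gamma\in\CC^n$), so irreducibility should transfer across the correspondence. Finite-dimensionality of $\overline{L(W)}$ is immediate: $L(W)=W\otimes A$ is a weight module with $D(u,0)$ acting as $(u\mid \alpha+k)$ on the degree-$k$ piece, so $\overline{L(W)}=(W\otimes A)/W'$ is a single weight space modulo the relations $v\otimes t^k - v(0)$, hence has dimension at most $\dim W<\infty$.

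First I would establish irreducibility by contraposition. Suppose $U\subsetneq \overline{L(W)}$ were a nonzero proper $\SS_n'$-submodule under the action $\theta_r$. The plan is to lift $U$ back through the functor $L(-)$: form $L(U)\subseteq L(\overline{L(W)})$, and use the isomorphism $(\pi_{\alpha,\alpha-r}, L(\overline{W\otimes A}))\cong(\pi, W\otimes A)$ from observation (2b) to transport $L(U)$ to a subspace of the original module $L(W)$. One must check that $L(U)$ is genuinely an $\SS_n\ltimes A$-submodule: this follows because the $\SS_n\ltimes A$-action on $L(U)=U\otimes A$ is built entirely from the $\SS_n'$-action on $U$ together with the $A$-action by multiplication and the diagonal scalars $(u\mid\alpha+k)$, all of which preserve $U\otimes A$ once $U$ is $\SS_n'$-stable. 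Transporting through the isomorphism then yields a nonzero proper $\SS_n\ltimes A$-submodule of $L(W)$, contradicting the irreducibility of $(\pi, L(W))$. Hence no such $U$ exists and $\overline{L(W)}$ is irreducible.

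The main obstacle I anticipate is verifying that the passage $U\mapsto L(U)$ produces a \emph{proper} nonzero submodule and that properness is preserved by the correspondence — that is, ensuring the functors $L(-)$ and $\overline{(-)}$ really are inverse bijections on submodule lattices rather than merely sending objects to objects. Concretely, one needs that $\overline{L(U)}\cong U$ as $\SS_n'$-modules (the analogue of observation (2a), $(\theta_{\alpha-\beta},\overline{L(W)})\cong(\theta,W)$), so that a proper nonzero $U$ cannot collapse to zero or expand to everything after going around the loop. I would therefore spell out the two natural maps $U\otimes A/W'' \to U$ and its inverse explicitly and check they respect the $\SS_n'$-action twisted by the appropriate $\gamma$, handling the bookkeeping of the shift parameters $\alpha,\beta,\mu,r$ carefully. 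Once this lattice isomorphism is in hand, irreducibility of $\overline{L(W)}$ drops out immediately, and the finite-dimensionality was already clear; the remaining content is purely the routine verification of the module-map compatibilities, which mirror the checks indicated for the preceding Lemma.
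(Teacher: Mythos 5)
Your proof is correct and is essentially the argument the paper intends: the paper states this lemma without proof (``easy to prove''), relying precisely on the correspondence in observations (1) and (2b) that you invoke, namely lifting a $\theta_r$-stable subspace $U\subseteq\overline{L(W)}$ to the $\SS_n\ltimes A$-submodule $L(U)=U\otimes A$ of $L(\overline{L(W)})\cong L(W)$. Note only that your anticipated ``main obstacle'' is not actually needed: since $U\otimes A$ is visibly nonzero and proper whenever $U$ is, the contradiction with irreducibility of $(\pi,L(W))$ is immediate, without establishing the full inverse-bijection of submodule lattices or the identity $\overline{L(U)}\cong U$.
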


\begin{thm} \label{ths}
Let $W$ be  finite dimensional irreducible module for $\mathfrak{sl}_n$ and extend it to $\mathfrak{gl}_n$ by letting $I$ trivially. Let $E_{i j}$ be generators of $\mathfrak{gl}_n$. Then $W$ can be made into $S_n '$-module by the action:
$I(u,r) w = \sum_{i, j}{u_i r_j E_{j i} w} + (u, \gamma) w$, where $\gamma \in \mathbb{C}^n$. 
\end{thm}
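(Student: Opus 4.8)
The plan is to verify directly that the proposed formula
\[
I(u,r)\,w = \sum_{i,j} u_i r_j E_{ji}\,w + (u,\gamma)\,w
\]
defines a representation of $\SS_n'$, i.e.\ that it respects the bracket
$[I(v,s), I(u,r)] = (u,s)I(v,s) - (v,r)I(u,r) + I(w,r+s)$ with $w = (v,r)u - (u,s)v$. Since the note preceding the statement observes that replacing $I(v,s)$ by $I(v,s)+(v,\gamma)$ preserves the bracket relations, it suffices to treat the case $\gamma = 0$ and then add the scalar shift back in at the end; so first I would set $\gamma = 0$ and define $T(u,r)w := \sum_{i,j} u_i r_j E_{ji} w$.

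\textbf{Main computation.} The key step is to compute the commutator $[T(v,s), T(u,r)]$ on $W$ using only the $\mathfrak{gl}_n$ relations $[E_{ab}, E_{cd}] = \delta_{bc} E_{ad} - \delta_{da} E_{cb}$. Writing
\[
[T(v,s), T(u,r)] = \sum_{i,j,k,l} v_i s_j u_k r_l\,[E_{ji}, E_{lk}],
\]
I would substitute the structure constants and collect the two resulting terms. The first term, coming from $\delta_{il} E_{jk}$, yields $\sum_{i} v_i r_i \sum_{j,k} s_j u_k E_{jk} = (v,r)\,T(u,s)$; the second, from $\delta_{kj} E_{li}$, yields $-(u,s)\,T(v,r)$ after relabelling. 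So
\[
[T(v,s), T(u,r)] = (v,r)\,T(u,s) - (u,s)\,T(v,r).
\]
The remaining task is purely algebraic: I must check that this equals the image under $T$ of the right-hand side of the $\SS_n'$ bracket. Applying $T$ to $(u,s)I(v,s) - (v,r)I(u,r) + I(w,r+s)$ gives $(u,s)T(v,s) - (v,r)T(u,r) + T(w, r+s)$, and expanding $T(w,r+s)$ with $w=(v,r)u-(u,s)v$ produces four terms. The bilinearity of $T$ in both slots, together with the orthogonality constraints $(v,s)=0$ and $(u,r)=0$ built into membership in $\SS_n'$, should make the diagonal terms $T(v,s)$ and $T(u,r)$ drop out, leaving exactly $(v,r)T(u,s) - (u,s)T(v,r)$.

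\textbf{Where the care is needed.} The only genuine obstacle is bookkeeping: one must track which index contractions survive and confirm that the mixed terms $T(u,s)$ and $T(v,r)$ assemble with the correct signs, while the pure terms cancel precisely because of the divergence-zero conditions $(u,r)=(v,s)=0$. These constraints are exactly what distinguishes $\SS_n'$ from the full Witt-type algebra and are essential for the identity to close; without them the extra diagonal contributions would not vanish. Once the $\gamma=0$ case is established, reinstating the scalar term $(u,\gamma)w$ is immediate from the observation in the text that such a shift preserves the bracket (the scalar is central and the bracket expression $(u,s)(v,\gamma)-(v,r)(u,\gamma)+(w,\gamma)$ vanishes by the same orthogonality, since $(w,\gamma)=(v,r)(u,\gamma)-(u,s)(v,\gamma)$). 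This completes the verification that $W$ becomes an $\SS_n'$-module.
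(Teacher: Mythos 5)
Your proposal is correct, and a direct verification is the right thing to do here: the paper itself gives no proof of Theorem \ref{ths} --- the statement is presented as a construction, and the surrounding text only invokes the correspondence lemmas and the classification in \cite{BT} to obtain irreducibility and exhaustiveness --- so your computation supplies exactly what is left to the reader. Your commutator calculation $[T(v,s),T(u,r)]=(v,r)T(u,s)-(u,s)T(v,r)$ is accurate, and the identity you need does hold. One point should be corrected, however: contrary to your ``where the care is needed'' paragraph, the cancellation of the diagonal terms does \emph{not} depend on the divergence-zero constraints $(u,r)=(v,s)=0$. Expanding $T(w,r+s)$ with $w=(v,r)u-(u,s)v$ by bilinearity gives $(v,r)T(u,r)+(v,r)T(u,s)-(u,s)T(v,r)-(u,s)T(v,s)$, and the terms $(v,r)T(u,r)$ and $-(u,s)T(v,s)$ cancel identically against the summands $-(v,r)T(u,r)$ and $(u,s)T(v,s)$ of the bracket's right-hand side; likewise the scalar identity $(u,s)(v,\gamma)-(v,r)(u,\gamma)+(w,\gamma)=0$ holds identically from the definition of $w$, with no orthogonality used. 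The conditions $(u|r)=(v|s)=0$ enter only to guarantee that the elements in question lie in $\SS_n'$ and that the quoted bracket closes inside that algebra (e.g.\ $(w|r+s)=0$); the homomorphism property itself would hold even without them. Since the cancellation you need does occur --- just for a simpler reason than the one you give --- this misattribution does not create a gap, but the claim that the identity would fail without the orthogonality constraints is false and should be removed.
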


Then $W$ is irreducible as $\SS_{n} '$-module,  as  $(\pi_{\alpha, \beta}, L(W))$ is an ireducible $\SS_n \ltimes A$ for any $\alpha, \beta \in \mathbb{C}^n$ .
We will now recall a result from \cite{BT} where they have classified irreductible $S_n \ltimes A$ modules
with finite dimensional weight spaces. Using this it follows that  all irreducible finite dimensional $S_n '$-modules occur as in Theorem \ref{ths} .

\begin{thm}
Suppose $W$ is a finite dimensional irreducible $\mathfrak{sl}_n$-module (extend $W$ to $\mathfrak{gl}_n$ trivially as before). Let $\alpha, \beta \in \mathbb{C}^n$. Consider $W \otimes A$ and the
action $D(u,r) (w \otimes t^{k}) = (u, k+\beta) w \otimes t^{k+r} + \sum_{i,j}{u_i r_j E_{ji}w \otimes t^{k+r}},$ for $r \neq 0$ and 
$D(u,0)(w \otimes t^k) = (u, \alpha + k) w \otimes t^k$ and $t^{r}(w \otimes t^k) = w \otimes t^{k+r}$. And all irreducible representations of $S_n \ltimes A$ module with finite dimensional weight spaces occur in this way.
\end{thm}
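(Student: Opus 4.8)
The statement combines a construction and a classification, and I would treat the two directions separately.

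For the first direction, I would verify that the prescribed formulas define an $\SS_n\ltimes A$-module. Writing the action of $D(u,r)$ on $w\otimes t^{k}$ (for $r\neq0$) as $\Phi(u,r,k)\,w\otimes t^{k+r}$ with $\Phi(u,r,k)=M(u,r)+(u,k+\beta)\,\mathrm{id}$ and $M(u,r)=\sum_{i,j}u_i r_j E_{ji}$, I would compute the commutator $\Phi(u,r,k+s)\Phi(v,s,k)-\Phi(v,s,k+r)\Phi(u,r,k)$ and check that it equals $\Phi\big((u,s)v-(v,r)u,\,r+s,\,k\big)$. The matrix parts recombine through the $\mathfrak{gl}_n$ relations to $M\big((u,s)v-(v,r)u,\,r+s\big)$, using bilinearity of $M$ in its two slots, while the scalar cross-terms collapse to exactly $\big((u,s)v-(v,r)u,\,k+\beta\big)$, matching the scalar part of the target; compatibility with the associative $A$-action, $[D(u,r),t^{s}]=(u,s)t^{r+s}$, is immediate. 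A key bookkeeping point is that on divergence-zero elements the component of $M(u,r)$ along $I=\sum_a E_{aa}$ equals $\tfrac{(u,r)}{n}I=0$, so only the $\mathfrak{sl}_n$-part of the action ever enters; this is why extending $W$ to $\mathfrak{gl}_n$ with $I$ acting trivially costs nothing. Irreducibility of $L(W)$ when $W$ is $\mathfrak{sl}_n$-irreducible has already been recorded, completing this direction.

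For the converse, let $V$ be an irreducible $\SS_n\ltimes A$-module with finite-dimensional weight spaces. Since the $A$-action is associative and each $t^{r}$ is invertible, multiplication by $t^{r}$ identifies the weight spaces $V_{\alpha+k}\xrightarrow{\sim}V_{\alpha+k+r}$; fixing a weight $\alpha$ with $V_\alpha\neq0$ and setting $W:=V_\alpha$, this gives $V\cong W\otimes A$ with $\dim W<\infty$. The abelian subalgebra $\{D(u,0)\}$ acts diagonally, forcing $D(u,0)(w\otimes t^{k})=(u,\alpha+k)\,w\otimes t^{k}$. Writing $D(u,r)(w\otimes t^{k})=\rho(u,r,k)\,w\otimes t^{k+r}$ for $r\neq0$ and exploiting $[D(u,r),t^{s}]=(u,s)t^{r+s}$, I would deduce $\rho(u,r,k)=\psi(u,r)+(u,k)\,\mathrm{id}$ for a single operator $\psi(u,r):=\rho(u,r,0)$ on $W$.

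The crux is to identify $\psi$. Substituting this decomposition into the bracket relation $[D(u,r),D(v,s)]=D\big((u,s)v-(v,r)u,\,r+s\big)$ and equating operators on $W$ produces a system of functional equations whose only solutions, over a finite-dimensional $W$, are $\psi(u,r)=\sum_{i,j}u_i r_j E_{ji}+(u,\beta)\,\mathrm{id}$ with the $E_{ji}$ obeying the $\mathfrak{gl}_n$ relations and $I$ acting as a scalar that may be absorbed into $\beta$. This is exactly the classification of irreducible weight modules for $\SS_n\ltimes A$ obtained in \cite{BT}, which I would invoke here; it is the main obstacle, as disentangling the genuine $\mathfrak{gl}_n$-action from the cocycle-type scalar contributions is precisely where the divergence-zero constraint and the finite weight-multiplicity hypothesis are used essentially. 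Finally, irreducibility of $V$ forces $W$ to be irreducible under the resulting $\mathfrak{sl}_n$-action, yielding the asserted normal form.
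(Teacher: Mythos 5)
Your proposal is correct and matches the paper's treatment: the paper offers no proof of this statement at all, presenting it as a recalled result from \cite{BT}, and your argument likewise defers the hard classification direction to \cite{BT}, adding only a routine verification that the formulas define a module (including the correct observation that $(u,r)=0$ kills the trace part of $\sum_{i,j}u_ir_jE_{ji}$) and the standard reduction $V\cong V_\alpha\otimes A$ via invertibility of the $t^{r}$-action. Since the crux of your converse is exactly the cited Billig--Talboom theorem, your write-up is in substance the same citation-based approach as the paper's.
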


We summerise the results of this section. We have proved that $\pi(S_n ') \supseteq \mathfrak{g}'$ and $\mathfrak{g}' = \mathfrak{sl}_n {(\mathbb{C})}$. So $\pi(\widetilde{L}) = \mathfrak{sl}_n (\mathbb{C}) \oplus \overset{\circ}{\mathfrak{g}_{\text{ss}}} \oplus \mathbb{C} c$. Now take a finite dimensional irreducible module for $\mathfrak{sl}_n (\mathbb{C}).$ Note that  $\overset{\circ}{\mathfrak{g}_{\text{ss}}}$ is $\Lambda$-graded. Let $W_2$ be finite dimensional $\Lambda$-graded
irreducible module for $\overset{\circ}{\mathfrak{g}_{\text{ss}}} $ such that it is compatible with respect
to $\Lambda$- grading of $\overset{\circ}{\mathfrak{g}_{\text{ss}}} $ (in reality the gradation is given by the quotient of $\Lambda$ but we assume it is $\Lambda$-graded to avoid additional notations; see section 8 of \cite{PSER}). Recall that $\widetilde{V} = \bigoplus {M_i}$, where all $M_i$ 's are isomorphic as $\widetilde{L}$-modules. Each $M_i$ is isomorphic to $V' \otimes V''$ as $\mathfrak{sl}_n \oplus \overset{\circ}{\mathfrak{g}_{\text{ss}}} $  module. Then $\widetilde{V} = V' \otimes \sum{ V"}$.

\section{Description of modules for $\TT$} \label{sec8}

Recall that $\TT^{0} = L(\fma(0), \sigma)\oplus Z^{0} \oplus \SS^{0}_{n+1}(m_0,m) $, where $ Z^{0} =  \displaystyle{\bigoplus_{0 \leq i \leq n, s\in \Gamma}}{\CC  t^{s} K_i}$ and $\SS^{0}_{n+1}(m_0,m)$ is identified with $\SS_{n}(m) \oplus \displaystyle{\sum_{r \in \Gamma}}{\CC t^{r}d_0}$.Now let, $W_1$ and $W_2$ be irreducible modules for $\mathfrak{sl}_n$ extended trivially to $\mathfrak{gl}_n$ and $\gg(0)$ respectively with $W_2$ being $\Lambda$-graded which is compatible with $\Lambda$-gradation of $\gg(0)$.
Define $\TT^{0}$ action on $W_1 \otimes W_2 \otimes A_n$ as follows:
For $\alpha, \beta \in \mathbb{C}^n, r \neq 0,  r \in \Gamma,  \gamma \in \fmh(0)^*, \Lambda in \HH^{*}$ and $\lambda\in \fmh(0)^*$ such that $\Lambda |_{\fmh{(0)}} = \lambda $, \\\
$D(u,r) (w_1 \otimes w_2 \otimes t^k) = (u, k+\beta) w_1 \otimes w_2 \otimes t^{k+r} + (\sum_{i, j}{u_i r_j E_{j i} w_1})\otimes w_2 \otimes t^{k+r}$.
For $r = 0$, 
$D(u,0) (w_1 \otimes w_2 \otimes t^k) = (u, k+\alpha) w_1 \otimes w_2 \otimes t^k $.\\
 $X \in \overset{\circ}{\mathfrak{g}}_{\text{ss}, \bar{k}}$, $X\otimes t^{k} (w_1 \otimes w_2 \otimes t^{l}) = w_1 \otimes X w_2 \otimes t^{k+l}, k , l \in \mathbb{Z}^n$. \\
 $r \neq 0, r \in \Gamma$, $\frac{1}{\lambda} t^{r} K_0 (w_1 \otimes w_2 \otimes t^{l}) = w_1 \otimes w_2 \otimes t^{k+l}$;\\ $K_0 (w_1 \otimes w_2 \otimes t^{k}) = 
 C_0 (w_1 \otimes w_2 \otimes t^{k})$.  \\For $r \in \Gamma, r \neq 0$, $d_0 \otimes t^{r} (w_1 \otimes w_2 \otimes t^{l}) = \Lambda_{d} (w_1 \otimes w_2 \otimes t^{l + r})$, where $\Lambda_d \in \CC$;\\
  $r = 0$, $d_0 (w_1 \otimes w_2 \otimes t^{l}) = \Lambda(d_0) w_1 \otimes w_2 \otimes t^{l}$.\\
  $r \neq 0, h \in \mathfrak{h}(0),
h \otimes t^{r}(w_1 \otimes w_2 \otimes t^{l}) = \gamma(h)(w_1 \otimes w_2 \otimes t^{l+r})$,  $r = 0,
h (w_1 \otimes w_2 \otimes t^{l}) = \lambda(h) w_1 \otimes w_2 \otimes t^{l};\, \lambda(h) = 0 \,\,\text{iff}\,\, \gamma(h) = 0$. Now take a one dimensional representation of $L(R, \sigma)$ say $\psi$. Then $L(R, \sigma)$ acts as a $\psi$.
More precisely for $y \otimes t^k$, $y \in R_{\bar{k}}$, Then $y \otimes t^{k}(w_1 \otimes w_2 \otimes t^{l}) = \psi(y)(w_1 \otimes w_2 \otimes t^{k+l}) $. One can check that $W_1 \otimes W_2 \otimes A_n$ is $\gg(0) \otimes A_n \oplus  Z^{0} \oplus \SS^{0}_{n+1}(m_0,m)$. It has $\TT^{0}$- module structure by considering $\Lambda$ gradation of $W_2 = \displaystyle{ \bigoplus_{\bar{k}\in\Lambda}{W_{2, \bar{k}}}}$ which is compatible with 
$\Lambda$-gradation of $\gg(0)$. Then it follows that the submodule $M' =\displaystyle{\bigoplus_{k \in \ZZ^{n}}}{W_1 \otimes W_{2, \bar{k}} \otimes t^{k}}$ is an irreducible module for $\TT^{0}$. Consider the induced module 
$\mathcal{M} = \mathrm{Ind}_{\TT^{0} \oplus \TT^{+}}{M'}$, with $\TT^{+}$ acting trivially on $M'$. Let $\mathcal{M}^{\mathrm{rad}}$ be the unique maximal submodule of $\mathcal{M}$. Then $\mathcal{M}/\mathcal{M}^{\mathrm{rad}}$ is an
ireducible module for $\TT$. We have the following:

\bthm \label{THM}
Let $V$ be an irreducible integrable $\TT$ module with finite dimensional weight spaces, with $K_0$ acting as $C_0 \in \ZZ_{>0}$ and rest of $K_i$'s act trivially for $1 \leq i \leq n$. Then $V \cong \mathcal{M}/\mathcal{M}^{\mathrm{rad}}$.
\ethm

\brmk
Using $\Lambda$-gradation on $\widetilde{V}$ one can define $\Lambda$-gradation on $L(\widetilde{V})$. In case if $\widetilde{V}$
is irreducible as $\mathcal{I}$-module, then it follows that $M$ is isomorphic as $\TT^{0}$-module to the zeroth graded piece of $L(\widetilde{V})$ with respect to $\Lambda$-gradation. When $\widetilde{V}$ is reducible as $\mathcal{I}$-module, inclusion of $M$ in
$L(\widetilde{V})$ is more complex. Infact there are several copies of $M$ inside $L(\widetilde{V})$. See Section 8 of \cite{PSER} for more details.
\ermk


\nocite*{}

School of mathematics, Tata Institute of Fundamental Research,
Homi Bhabha Road, Mumbai 400005, India.\\
email: sena98672@gmail.com, senapati@math.tifr.res.in

Department of mathematics and statistics, IIT Kanpur,
Kalyanpur, Kanpur, 208016, India.\\
sachinsh@iitk.ac.in

Harish-Chandra Research Institute,
HBNI, Allahabad 211019, India.\\
batra@hri.res.in
\end{document}